\renewcommand{\epsilon}{\varepsilon}
\renewcommand{\phi}{\varphi}
\newcommand{\overbar}[1]{\mkern 1.5mu\overline{\mkern-1.5mu#1\mkern-1.5mu}\mkern 1.5mu}
\newtheorem{theorem}{Theorem}[section]
\newtheorem{definition}[theorem]{Definition}
\newtheorem*{definition*}{Definition}
\newtheorem{corollary}[theorem]{Corollary}
\newtheorem{lemma}[theorem]{Lemma}
\newtheorem{remark}[theorem]{Remark}
\newtheorem*{conjecture*}{Conjecture}
\theoremstyle{definition}
\newtheorem{example}[theorem]{Example}
\definecolor{darkgreen}{rgb}{0,0.392,0}
\author{Satvik Singh}
\email{satviksingh2@gmail.com}
\address{\parbox{\linewidth}{Department of Physical Sciences,\\
Indian Institute of Science Education and Research (IISER) Mohali, Punjab, India.}}
\title[How to define your dimension?]{How to define your dimension \\ \normalfont{A \lowercase{discourse on} H\lowercase{ausdorff dimension and self-similarity}}}
\begin{document}

\begin{abstract}
    One often distinguishes between a line and a plane by saying that the former is one-dimensional while the latter is two. But, what does it mean for an object to have $d-$dimensions? Can we define a consistent notion of dimension rigorously for arbitrary objects, say a snowflake, perhaps? And must the dimension always be integer-valued? After highlighting some crucial problems that one encounters while defining a sensible notion of dimension for a certain class of objects, we attempt to answer the above questions by exploring the concept of Hausdorff dimension -- a remarkable method of assigning dimension to subsets of arbitrary metric spaces. In order to properly formulate the definition and properties of the Hausdorff dimension, we review the critical measure-theoretic terminology beforehand. Finally, we discuss the notion of self-similarity and show how it often defies our quotidian intuition that dimension must always be integer-valued.
\end{abstract}

\maketitle


\section{Introduction}
Intuitively, the dimension of a set can be thought of as the minimum number of parameters needed to specify a unique element within the set. Following this reasoning, one can roughly think of points, curves, surfaces and solids to have, respectively, zero, one, two and three dimensions. However, shortcomings in this simple-minded logic surface when one considers, for instance, the fact that the unit line has the same cardinality as the unit plane \cite{Cantor1877, nicolay2014building}, i.e. there exists a one-one mapping from $[0,1]$ onto $[0,1]^2$. Thus, any point in the plane can be specified by only one parameter by uniquely mapping it to its pre-image in $[0,1]$. Does this mean that the above intuition is flawed? What if we consider the additional topological (or metric) structure on these spaces? In this scenario, the seminal works of Peano \cite{Peano1890}, Hilbert \cite{Hilbert1891space} and others \cite{bader2012space} show that the simplest \emph{space-filling curves} (continuous mappings from the unit line \emph{onto} the unit plane, see Figure~\ref{fig:Hilbert}) can be used to ``fill up" two dimensions from a supposed one-dimensional set, thus apparently undermining the above notion of dimension again. 

\begin{figure}[H]
    \centering
    \includegraphics[scale=0.46,align=c]{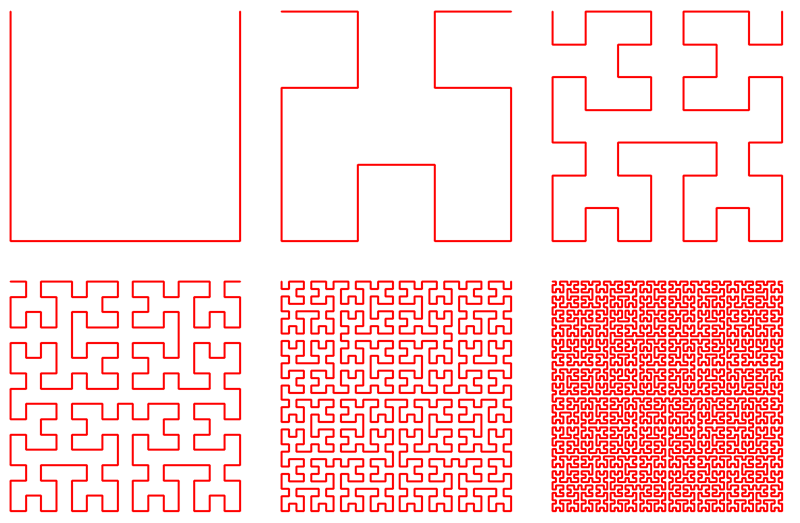}
    \caption{Ranges of the first six iterations of ``Psuedo-Hilbert" curves, whose pointwise limit yields the continuous space-filling Hilbert curve from $[0,1]$ onto $[0,1]\times [0,1]$}
    \label{fig:Hilbert}
\end{figure}

However, such space-filling curves are bound to be non-injective, meaning that although each point in the unit square is hit atleast once by the curve, its pre-image on the unit line is not unique. More generally, it can be shown that $[0,1]$ and $[0,1]^2$ are not topologically \emph{homeomorphic}, i.e. there is no continuous bijection between the two which can also be continuously inverted. The concept of \emph{topological dimension} \cite[Chapter 3]{edgar2008measure} uses this fact to dimensionally distinguish between the two spaces. Roughly speaking, the topological dimension of a topological space is the minimum $d$ such that, given any open cover of the space, there exists a refinement with the property that no point in the space lies in the intersection of more than $d+1$ covering sets in the refinement. In this language, the unit line has topological dimension $d=1$, since any open cover of it is bound to count some points twice. Similarly, it is not too hard to see that the unit plane has topological dimension $d=2$. Clearly, \emph{homeomorphic} spaces have identical topological dimensions. However, is invariance under homeomorphisms really a desirable property that every notion of dimension must posses? 

To answer the above question, let us consider the graph $\Gamma(f)\coloneqq \{ (x,f(x))\in \mathbb{R}^3 : x\in [0,1] \}$ of some space-filling curve $f:[0,1]\rightarrow [0,1]^2$. Intuitively, one would expect that the dimension of $\Gamma(f)$ must atleast be greater than one, because the range of $f$ completely fills the unit plane (it is not hard to imagine that $\Gamma (f)$ is a ragged object in the unit cube, which looks like the unit square when seen head on from a direction parallel to the $x$-axis)! However, since the graph of any continuous function is homeomorphic to its domain, the topological dimension of $\Gamma(f)$ is again one. This is just one example of a large family of \emph{self-similar} objects (or \emph{fractals}), for which the topological dimension is smaller than what one might expect. The problem with the topological notion of dimension is that it is completely oblivious to any \emph{local structure} (or roughness) that the concerned object might posses, since homeomorphisms can only discriminate between those spaces which \emph{cannot} be transformed into one another by means of a \emph{global} continuous deformation. Let us consider the self-similar \emph{Koch snowflake} from Figure~\ref{fig:koch} to further elaborate on this point. While it is evident that the curve doesn't fill up the entire plane, it is definitely more complex than a line because of all the intricate spikes and crevices. Being impervious to this additional structure, a topologist would again call this a one-dimensional curve, since it can be shown to be homeomorphic to an interval in $\mathbb{R}$. It is thus clear that a more sophisticated notion of dimension needs to be developed in order to deal with such exotic objects. 

\begin{figure}[H]
    \centering
    \includegraphics[scale=0.43,align=c]{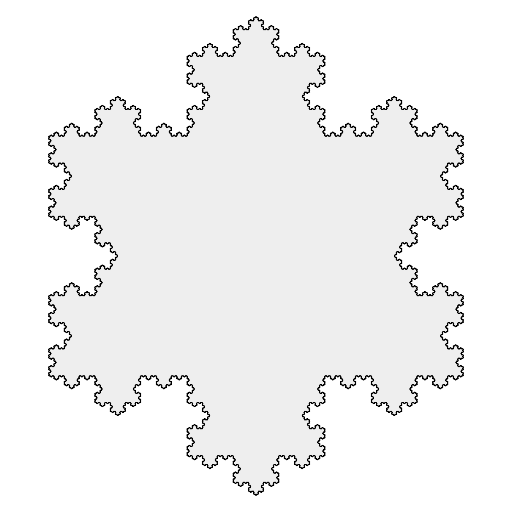}
    \caption{The $7^{\text{th}}$ level in the construction of the Koch snowflake.}
    \label{fig:koch}
\end{figure}

\begin{remark}
One can also exploit the linear structure of the real line $\mathbb{R}$ and the real plane $\mathbb{R}^2$ to dimensionally distinguish between them. In this regime, the notion of \emph{vector space dimension} becomes important, which, for any \emph{finite-dimensional vector space}, is just the (unique) cardinality of its basis set. The structure preserving maps in the category of vector spaces are \emph{linear bijections}, and consequently, the vector space dimension stays \emph{invariant} under them. One can extend this idea of linear dimension to spaces which only locally look like $\mathbb{R}^d$ to obtain the class of $d$-dimensional \emph{manifolds}. However, this notion of dimension is too restrictive in its domain, since many objects occuring in nature are not manifolds! Or, as Mandelbrot would say \cite[p. 1]{mandelbrot1983fractal}, “clouds are not spheres, mountains  are  not  cones,  coastlines  are  not  circles,  and  bark  is  not smooth, nor does lightning travel in a straight line.” In this regard, it would be instructive for the reader to show that the closed unit interval $[0,1]$, for instance, is neither a vector space nor a manifold.
\end{remark}

During the early $20^{\text{th}}$ century, a remarkable method was developed by Hausdorff (along with Lebesgue) for assigning dimension to subsets of an arbitrary metric space, which not only generalizes the topological notion but also takes the sets' local structure into account. The idea is to explicitly employ the underlying metric (or distance function, which is sensitive to the object's local structure at small scales) in order to associate a ``proper" kind of measurement to the concerned object, which will be used to measure its size. For instance, if we consider an interval $(a,b)\subset \mathbb{R}$, we can say that its cardinality ($\operatorname{dim}=0$) is infinite, its length ($\operatorname{dim}=1$) is $b-a$ and its area ($\operatorname{dim}=2$) is zero. The step $d$ at which the size of an object changes from infinity to zero serves as an excellent candidate for its dimension. For the interval $(a,b)$, this step is $d=1$. Moreover, if the step size for deciding this proper measurement is allowed to be arbitrarily small, then nothing stops the dimension of any given object from taking an arbitrary real value between zero and infinity. The only challenge is to come up with a meaningful notion of the $\delta$-dimensional size of a given object for every $\delta>0$.

In this article, we will formalize the above idea of \emph{Hausdorff dimension} for subsets of arbitrary metric spaces. In order to do so, it is essential to first gather the appropriate measure-theoretic machinery, which will form the basis of our discussion in Section~\ref{sec:measure}. Section~\ref{sec:hausdorff} forms the core of this article, where we develop the promised definition of the Hausdorff dimension and elucidate several of its properties. Finally, we describe the notion of self-similarity for subsets of Euclidean spaces $\mathbb{R}^k$ in Section~\ref{sec:eg} and compute the Hausdorff dimension of several famous self-similar objects like the \emph{Cantor set}, the \emph{Koch curve} and the \emph{Sierpiński triangle}. It is probably worthwhile to emphasis here that this article is only meant to provide a light and accessible introduction to the basics of dimension theory and self-similarity. Hence, we do not claim to be exhaustive in our account of the aforementioned topics, neither do we claim to report on any modern advancements in the field. For a more sophisticated discussion on these topics, we refer the readers to the wonderful articles \cite{GRASSBERGER1985Haus, Manin2005TheNO,Dierk2005Haus,Elekes2011Haus,fernandex2016Haus} and books \cite{mandelbrot1983fractal, falconer1990fractal, peitgen1992chaos, edgar2008measure}.


\section{Measure theoretic preliminaries} \label{sec:measure}
In this section, we'll briefly review some key notions from measure theory, which will be instrumental in defining the notion of Hausdorff dimension in the next section. For a much more thorough introduction to measure theory, the readers are referred to the classic texts \cite{rudin1987real,billingsley1986probability,halmos1976measure}. Before proceeding with our discussion, let us quickly review some basic notation. Given a non-empty set $X$, we denote its power set and the empty subset by $\mathcal{P}(X)$ and $\emptyset$, respectively. The complement of a subset $A\subseteq X$ is denoted by $X\setminus A$. The non-negative reals along with infinity are denoted by the interval $[0,\infty]$.

In a nutshell, measure theory deals with the task of sensibly assigning volumes to the subsets of a given set $X$. For $X=\mathbb{R}$, for instance, one would like to get hold of a function (called a \emph{measure}) $\mu: \mathcal{P}(\mathbb{R})\rightarrow [0,\infty]$ such that it is an extension of the usual notion of length for intervals, i.e. $\sigma ((a-b]) = b-a \,\,\, \forall a,b\in \mathbb{R}$. It is also rational  to demand that this function be \emph{countably additive} and \emph{translationally invariant}, meaning that shifting a subset by a real number should not change its volume and that the volume of countable disjoint unions of subsets should just be the sum of individual volumes of the subsets. However, by using the \emph{axiom of choice}, one can show that such a function cannot exist, see \cite[Section 16]{halmos1976measure}! It turns out that asking for a measure to be defined on all subsets of the real line is too strong a demand. Hence, one needs to restrict the domain of definition, which is where the notion of $\sigma$-algebras come into picture. 

\begin{definition}
A collection $\mathcal{F}$ of subsets of a set $X$ is called a $\sigma$-\emph{algebra} if:
\begin{itemize}
    \item $\emptyset, X \in \mathcal{F}$.
    \item $A\in \mathcal{F}\implies X\setminus A\in \mathcal{F}$.
    \item $\{A_n\}_{n\in \mathbb{N}}\subseteq \mathcal{F} \implies \bigcup_{n\in \mathbb{N}}A_n \in \mathcal{F}$.
\end{itemize}
\end{definition}

Using the last two properties, one can easily deduce that $\sigma$-algebras are also closed under countable intersections. The set $X$ along with a $\sigma$-algebra $\mathcal{F}$ is said to define a \emph{measurable} space $(X,\mathcal{F})$. The subsets in $\mathcal{F}$ are then called \emph{measurable}. Given a collection $\mathcal{C}$ of subsets of a set $X$, it is natural to ask for a minimal $\sigma$-algebra on $X$ which contains $\mathcal{C}$. For instance, suppose we have a topology $\tau$ (a collection of open sets) defined on $X$ and we want to construct a $\sigma$-algebra on $X$ such that it contains $\tau$. The most economical way to do this is to obviously consider the minimal such $\sigma$-algebra. The following result assures us that such a minimal $\sigma$-algebra exists for every $\mathcal{C}$.

\begin{lemma}
Let $\mathcal{C}$ be a non-empty collection of subsets of a set $X$. Define $$\sigma (\mathcal{C}) \coloneqq \bigcap \{\mathcal{F} : \mathcal{F} \text{ is a } \sigma\text{-algebra which contains } \mathcal{C} \}. $$ Then, $\sigma (\mathcal{C})$ is the minimal $\sigma$-algebra which contains $\mathcal{C}$, i.e. if $\mathcal{G}$ is a $\sigma$-algebra which contains $\mathcal{C}$, then $\sigma (\mathcal{C}) \subseteq \mathcal{G}$. We say that the collection $\mathcal{C}$ generates $\sigma(\mathcal{C})$.
\end{lemma}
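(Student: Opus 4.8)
The plan is to verify the three defining properties of a $\sigma$-algebra for the intersection $\sigma(\mathcal{C})$, and then read off minimality directly from its construction. The key structural observation is that an \emph{arbitrary} intersection of $\sigma$-algebras is again a $\sigma$-algebra, even though the axioms only demand closure under \emph{countable} unions; the point is that each individual membership check gets carried out simultaneously inside every $\mathcal{F}$ appearing in the family.

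Before anything else, I would confirm that the collection being intersected is non-empty, so that $\sigma(\mathcal{C})$ is well-defined—an intersection over the empty family would be meaningless here. This is immediate: the full power set $\mathcal{P}(X)$ is itself a $\sigma$-algebra, and it trivially contains $\mathcal{C}$, so it appears among the $\mathcal{F}$'s. Hence the intersection is taken over a non-empty collection, and $\sigma(\mathcal{C}) \subseteq \mathcal{P}(X)$ is a genuine collection of subsets of $X$.

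Next I would check the three axioms for $\sigma(\mathcal{C})$. For the first, since every $\sigma$-algebra $\mathcal{F}$ in the family contains both $\emptyset$ and $X$, these two sets lie in the intersection. For closure under complements, suppose $A \in \sigma(\mathcal{C})$; then $A \in \mathcal{F}$ for \emph{every} $\mathcal{F}$ in the family, whence $X \setminus A \in \mathcal{F}$ for every such $\mathcal{F}$ by the complement axiom, so $X \setminus A \in \sigma(\mathcal{C})$. Closure under countable unions follows by the same pattern: if $\{A_n\}_{n \in \mathbb{N}} \subseteq \sigma(\mathcal{C})$, then this whole sequence sits inside each $\mathcal{F}$, so $\bigcup_{n \in \mathbb{N}} A_n \in \mathcal{F}$ for every $\mathcal{F}$, and therefore the union belongs to $\sigma(\mathcal{C})$. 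Thus $\sigma(\mathcal{C})$ is a $\sigma$-algebra, and since each $\mathcal{F}$ contains $\mathcal{C}$, so does their intersection.

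Finally, minimality is essentially definitional and poses no real obstacle: if $\mathcal{G}$ is any $\sigma$-algebra containing $\mathcal{C}$, then $\mathcal{G}$ is one of the members of the family $\{\mathcal{F} : \mathcal{F} \text{ is a } \sigma\text{-algebra containing } \mathcal{C}\}$, and an intersection is always contained in each of its members, so $\sigma(\mathcal{C}) \subseteq \mathcal{G}$. If there is any subtle point worth flagging, it is the very first one—ensuring the intersected family is non-empty—since the whole construction would collapse otherwise; everything afterward is a routine ``check it inside every $\mathcal{F}$'' argument.
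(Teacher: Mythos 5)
Your proof is correct and complete: the paper states this lemma without proof, and your argument is exactly the standard one that fills that gap---verifying non-emptiness of the intersected family via $\mathcal{P}(X)$, checking the three $\sigma$-algebra axioms elementwise inside every $\mathcal{F}$, and reading off minimality from the fact that any such $\mathcal{G}$ is itself a member of the family. Nothing is missing; your flag about the non-emptiness of the family being the only subtle point is well placed.
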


Given an arbitrary topological space $(X,\tau)$, we can turn it into a measurable space by defining $\mathcal{F}=\sigma(\tau)$ to be the $\sigma$-algebra generated by the topology $\tau$. The measurable sets are then called \emph{Borel} sets. It is easy to see that all open, closed and compact sets (among others) in $X$ are Borel. 

With the concept of $\sigma$-algebra in place, we now arrive at the central definition of a measure. 
\begin{definition}
Consider a measurable space $(X,\mathcal{F})$. Then, a set function $\mu : \mathcal{F}\mapsto [0,\infty]$ is said to be a \emph{measure} if $\mu (\emptyset) = 0$ and if $\mu$ is countably additive, i.e. for all countable pairwise disjoint collections of measurable subsets $\{A_n\}_{n\in \mathbb{N}}\subseteq \mathcal{F}$, $\mu (\bigcup_{n\in \mathbb{N}} A_n ) = \sum_{n=1}^\infty \mu (A_n)$.
\end{definition}
Notice that measurable sets are allowed to have infinite measure. A measurable space $(X,\mathcal{F})$ equipped with a measure $\mu$ is called a \emph{measure space} $(X,\mathcal{F},\mu)$. We list some important properties of measures below, which are straightforward consequences of the above definition.

\begin{lemma}
Let $(X,\mathcal{F},\mu)$ be a measure space. Then,
\begin{itemize}
    \item $\mu (\cup_{i=1}^n A_n) = \sum_{i=1}^n \mu(A_n)$, where $\{A_i \}_{i=1}^n\subseteq \mathcal{F}$ is a pairwise disjoint collection of subsets.
    \item $A,B\in \mathcal{F}$ with $A\subseteq B \implies \mu (A) \leq \mu (B)$.
    \item $A,B\in \mathcal{F}$ with $A\subseteq B$ and $\mu(A) < \infty \implies \mu(B\setminus A)=\mu (B)-\mu (A)$.
\end{itemize}
\end{lemma}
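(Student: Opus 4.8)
The plan is to prove each of the three properties directly from the definition of a measure, using finite and countable additivity together with the basic $\sigma$-algebra closure properties established earlier. For the \emph{finite additivity} claim, I would reduce it to countable additivity by a standard padding trick: given a finite pairwise disjoint collection $\{A_i\}_{i=1}^n$, extend it to a countable collection by setting $A_i = \emptyset$ for all $i > n$. Since $\mu(\emptyset) = 0$, countable additivity yields $\mu(\bigcup_{i=1}^n A_i) = \sum_{i=1}^\infty \mu(A_i) = \sum_{i=1}^n \mu(A_i)$, where the infinite tail contributes nothing. This requires only that the padded collection remain pairwise disjoint, which is immediate since the empty set is disjoint from everything.

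For the \emph{monotonicity} claim, I would write $B$ as a disjoint union $B = A \sqcup (B \setminus A)$, noting that $B \setminus A = B \cap (X \setminus A) \in \mathcal{F}$ by the $\sigma$-algebra axioms, and that $A \subseteq B$ guarantees this decomposition is genuinely disjoint with union exactly $B$. Applying finite additivity (just proved) gives $\mu(B) = \mu(A) + \mu(B \setminus A)$. Since measures are non-negative, $\mu(B \setminus A) \geq 0$, and therefore $\mu(A) \leq \mu(B)$. The third claim, on \emph{subtractivity}, follows from the very same decomposition: from $\mu(B) = \mu(A) + \mu(B \setminus A)$ one simply subtracts $\mu(A)$ from both sides to obtain $\mu(B \setminus A) = \mu(B) - \mu(A)$.

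The only genuine subtlety, and the step I would flag as the main obstacle, is the hypothesis $\mu(A) < \infty$ in the third item. This finiteness is precisely what licenses the rearrangement $\mu(B) - \mu(A)$: without it, if $\mu(A) = \infty$ then $\mu(B) = \infty$ as well by monotonicity, and the expression $\mu(B) - \mu(A)$ becomes the indeterminate form $\infty - \infty$, which is not well-defined in $[0,\infty]$. I would therefore emphasize that the subtraction is meaningful only because $\mu(A)$ is a finite real number that can be legitimately cancelled from the additive identity. The first two items, by contrast, hold unconditionally precisely because they never require subtracting a possibly-infinite quantity.

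Overall the proof is short and mechanical, with each item building on the previous one: finite additivity underpins monotonicity, and both the disjoint decomposition $B = A \sqcup (B \setminus A)$ and the finiteness hypothesis combine to deliver subtractivity. I would present the three parts in the stated order so that the decomposition $B = A \sqcup (B\setminus A)$ is set up once and reused for both the second and third claims.
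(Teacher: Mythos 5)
Your proof is correct, and it is exactly the standard argument the paper has in mind: the paper states this lemma without proof, calling its items ``straightforward consequences'' of the definition of a measure, and your padding-by-$\emptyset$ trick for finite additivity, the disjoint decomposition $B = A \cup (B\setminus A)$ for monotonicity and subtractivity, and the careful flagging of the $\mu(A) < \infty$ hypothesis (to avoid the indeterminate $\infty - \infty$) are precisely the intended steps. Nothing is missing.
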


More often than not, the task of transforming sets into non-trivial measure spaces is quite complicated. One often tries to bypass the difficulty of choosing an appropriate $\sigma$-algebra by constructing an \emph{outer measure} first, which is a special set function defined on \emph{all} subsets of a given set. The price to pay is to relax the countable additivity condition to just countable \emph{sub}-additivity.

\begin{definition}
An \emph{outer measure} on a set $X$ is a set function $\overbar\mu : \mathcal{P}(X)\mapsto [0,\infty]$ such that
\begin{itemize}
    \item $\overbar\mu (\emptyset) = 0$.
    \item (Monotonicity) $A\subseteq B \implies \overbar\mu (A)\leq \overbar\mu (B)$.
    \item (Countable sub-additivity) $\{A_n\}_{n\in \mathbb{N}}\subseteq \mathcal{P}(X)\implies \overbar\mu (\bigcup_{n\in \mathbb{N}} A_n ) \leq \sum_{n=1}^\infty \mu (A_n)$.
\end{itemize}
\end{definition}

Recall that a collection $\mathcal{A}$ of subsets of $X$ is said to cover a subset $B\subseteq X$ if $B\subseteq \bigcup_{A\in \mathcal{A}} A$. Now, if one already has a set function $f:\mathcal{A}\mapsto [0,\infty]$ defined on a family $\mathcal{A}$ of subsets of a given set $X$ which covers $X$, then the following theorem provides a powerful method to construct an outer measure on $X$. It should be noted that the assumptions here are quite easily fulfilled in practice. The real line $\mathbb{R}$ with the family of all half-open intervals equipped with the length function $f((a,b])=b-a$ readily provides an example.

\begin{theorem} \label{theorem:outer-1}
Given an arbitrary set $X$ along with a family of covering subsets $\mathcal{A}$ and a set function $f:\mathcal{A}\rightarrow [0,\infty]$, there exists a unique outer measure $\overbar\mu$ on $X$ such that 
\begin{itemize}
    \item $\overbar\mu(A) \leq f(A) \,\, \forall A\in \mathcal{A}$.
    \item if $\overbar \nu$ is another outer measure on $X$ with $\overbar\nu (A)\leq f(A) \,\, \forall A\in \mathcal{A}$, then $\overbar\nu (B)\leq \overbar\mu (B) \,\, \forall B\subseteq X$.
\end{itemize}
Moreover, $\overbar\mu$ is given by the following formula: $\overbar\mu (B) = \operatorname{inf} \sum_{A\in \mathcal{C}} f(A)$, where the infimum is taken over all countable covers $\mathcal{C}$ of $B$ by sets of $\mathcal{A}$. 
\end{theorem}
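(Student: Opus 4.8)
The plan is to take the displayed formula as the \emph{definition} of $\overbar\mu$ and then verify each asserted property in turn, with uniqueness falling out automatically from the two bullet conditions. So I would set
$$ \overbar\mu (B) \coloneqq \inf \Big\{ \sum_{A\in \mathcal{C}} f(A) : \mathcal{C}\subseteq \mathcal{A} \text{ is a countable cover of } B \Big\}, $$
adopting the convention that the infimum over the empty collection is $+\infty$, so that $\overbar\mu(B)=\infty$ precisely when $B$ admits no countable cover by sets of $\mathcal{A}$. Since every $f(A)\geq 0$, each of these sums is unambiguously defined in $[0,\infty]$ (finite sums and series of non-negative terms are insensitive to reordering), and hence $\overbar\mu$ is a well-defined set function on $\mathcal{P}(X)$.

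First I would confirm that $\overbar\mu$ is an outer measure. The empty collection covers $\emptyset$ with empty sum, giving $\overbar\mu(\emptyset)=0$. Monotonicity is immediate: if $A\subseteq B$, then every countable cover of $B$ is also a cover of $A$, so the infimum defining $\overbar\mu(A)$ ranges over a larger family and cannot exceed $\overbar\mu(B)$. The substantive step is countable sub-additivity. Given $\{B_n\}_{n\in\mathbb{N}}$, if some $\overbar\mu(B_n)=\infty$ the inequality is trivial; otherwise I fix $\epsilon>0$ and, invoking the definition of the infimum, select for each $n$ a countable cover $\mathcal{C}_n\subseteq\mathcal{A}$ of $B_n$ with $\sum_{A\in\mathcal{C}_n} f(A)\leq \overbar\mu(B_n)+\epsilon\,2^{-n}$. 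Then $\mathcal{C}\coloneqq\bigcup_n \mathcal{C}_n$ is a countable union of countable families, hence countable, and it covers $\bigcup_n B_n$; consequently $\overbar\mu(\bigcup_n B_n)\leq \sum_{A\in\mathcal{C}} f(A)\leq \sum_n \sum_{A\in\mathcal{C}_n} f(A)\leq \sum_n \overbar\mu(B_n)+\epsilon$, where the middle inequality survives even when the $\mathcal{C}_n$ overlap, since discarding repeated sets only lowers the sum. Letting $\epsilon\to 0$ completes this step.

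Next I would dispatch the two bullet conditions. The first is trivial: for $A\in\mathcal{A}$ the singleton $\{A\}$ is a legitimate cover of $A$, so $\overbar\mu(A)\leq f(A)$. The maximality bullet is the crux of the characterization. Let $\overbar\nu$ be any outer measure with $\overbar\nu(A)\leq f(A)$ for all $A\in\mathcal{A}$, and fix $B\subseteq X$. For an arbitrary countable cover $\mathcal{C}$ of $B$ I chain the monotonicity and countable sub-additivity of $\overbar\nu$ to obtain $\overbar\nu(B)\leq \overbar\nu\big(\bigcup_{A\in\mathcal{C}} A\big)\leq \sum_{A\in\mathcal{C}}\overbar\nu(A)\leq \sum_{A\in\mathcal{C}} f(A)$; taking the infimum over all such $\mathcal{C}$ yields $\overbar\nu(B)\leq \overbar\mu(B)$, as required.

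Uniqueness then follows purely formally: if $\overbar\mu_1$ and $\overbar\mu_2$ both satisfy the two bullets, then applying the maximality of each against the first bullet of the other gives $\overbar\mu_2\leq\overbar\mu_1$ and $\overbar\mu_1\leq\overbar\mu_2$ pointwise, hence equality. I expect the countable sub-additivity argument to be the only real obstacle; the rest is bookkeeping. The genuine points of care there are the $\epsilon\,2^{-n}$ budgeting, the observation that a countable union of countable covers stays countable, and the convention that correctly handles subsets admitting no countable cover.
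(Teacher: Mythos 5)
Your proof is correct and complete. The paper actually states this theorem without proof (it is the standard ``Method I'' construction from the measure-theory texts it cites), and your route --- taking the covering formula as the \emph{definition} of $\overbar\mu$, verifying the outer-measure axioms with the $\epsilon\,2^{-n}$ budgeting trick for countable sub-additivity, deducing the maximality bullet by chaining monotonicity and sub-additivity of the competitor $\overbar\nu$ over an arbitrary countable cover, and extracting uniqueness formally from the two bullets --- is precisely that standard argument; it is also the same technique the paper itself deploys later when proving that $\overbar H^\delta_\epsilon$ is an outer measure. Your explicit convention that the infimum over an empty family of covers is $+\infty$ (for sets admitting no countable cover by members of $\mathcal{A}$) is a detail the paper glosses over but which is genuinely needed for $\overbar\mu$ to be defined on all of $\mathcal{P}(X)$.
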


One can think of $f$ in the above theorem as the candidate for defining the measure of certain subsets of $X$. Then, the theorem presents a way to assign an outer measure to these subsets which is as large as possible, with the constraint that it should not exceed the candidate values. Now, given an outer measure $\overbar\mu$ on a set $X$, there is a canonical way to construct a $\sigma$-algebra $\mathcal{F}$ on $X$ and an associated measure $\mu$. We present this method in the following theorem.

\begin{theorem} \label{theorem:outer-to-measure}
Let $\overbar\mu$ be an outer measure on a set $X$. A subset $A\subseteq X$ is said to be $\overbar\mu$-measurable if $\overbar\mu (E) = \overbar\mu (E\cap A) + \overbar\mu (E \setminus A) \,\, \forall E\subseteq X$. The collection $\mathcal{F}$ of all $\overbar\mu$-measurable subsets of $X$ forms a $\sigma$-algebra on $X$ and the restriction of $\overbar\mu$ to $\mathcal{F}$ is a measure.
\end{theorem}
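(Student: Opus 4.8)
The plan is to prove the statement of Theorem~\ref{theorem:outer-to-measure} (the Carath\'eodory construction) by first establishing that the collection $\mathcal{F}$ of $\overbar\mu$-measurable sets is a $\sigma$-algebra, and then verifying that the restriction $\mu \coloneqq \overbar\mu|_{\mathcal{F}}$ is countably additive. A useful preliminary observation is that by subadditivity one always has $\overbar\mu(E) \leq \overbar\mu(E\cap A) + \overbar\mu(E\setminus A)$ for \emph{every} $A$, so the Carath\'eodory condition reduces to checking only the reverse inequality $\overbar\mu(E) \geq \overbar\mu(E\cap A) + \overbar\mu(E\setminus A)$; this simplification streamlines every subsequent verification.

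First I would check the easy axioms. That $\emptyset, X \in \mathcal{F}$ is immediate, and the symmetry of the splitting condition under $A \leftrightarrow X\setminus A$ shows $\mathcal{F}$ is closed under complementation. The substantive work is closure under countable unions, and the natural strategy is to prove closure under finite unions first and then bootstrap to the countable case. For finite unions it suffices to handle two sets: given $A, B \in \mathcal{F}$, I would test the set $A\cup B$ against an arbitrary $E$ by applying the measurability of $A$ to split $E$, then applying the measurability of $B$ to split the piece $E\setminus A$, and finally reassembling the three resulting terms using subadditivity to recover $\overbar\mu(E\cap(A\cup B))$. Iterating gives closure under all finite unions, and since $\mathcal{F}$ is already closed under complements, it is closed under finite intersections and finite set differences as well.

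For the passage to countable unions, write a countable union as an increasing union of finite unions, and reduce further to a \emph{disjoint} countable union $A = \bigsqcup_{n} A_n$ (replacing $A_n$ by $A_n \setminus \bigcup_{k<n} A_k$, which stays in $\mathcal{F}$ by the finite-case results). The key technical step is to prove by induction, using disjointness and the measurability of each finite partial union $B_N \coloneqq \bigsqcup_{n\le N} A_n$, the finite additivity estimate
\begin{equation*}
    \overbar\mu(E \cap B_N) = \sum_{n=1}^{N} \overbar\mu(E\cap A_n).
\end{equation*}
Testing the measurable set $B_N$ against $E$, discarding the nonnegative term $\overbar\mu(E\setminus B_N) \geq \overbar\mu(E\setminus A)$ by monotonicity, and letting $N\to\infty$ yields $\overbar\mu(E) \geq \sum_{n=1}^{\infty} \overbar\mu(E\cap A_n) + \overbar\mu(E\setminus A)$. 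One final application of countable subadditivity to the first sum gives the reverse Carath\'eodory inequality for $A$, so $A\in\mathcal{F}$ and $\mathcal{F}$ is a $\sigma$-algebra.

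I expect the main obstacle to be precisely this interchange of a limit with the infinite sum in the countable-union step: one must be careful that monotonicity and the $N\to\infty$ passage are applied in the correct order so that the nonnegative tail is dropped \emph{before} taking the limit, since $\overbar\mu$ is only subadditive and one cannot freely split $\overbar\mu(E\setminus B_N)$. Countable additivity of the restricted measure $\mu$ then falls out almost for free: taking $E = A$ in the displayed inequality forces $\overbar\mu(A) \geq \sum_n \overbar\mu(A_n)$, while subadditivity gives the reverse, so $\mu(\bigsqcup_n A_n) = \sum_n \mu(A_n)$. Together with $\mu(\emptyset)=\overbar\mu(\emptyset)=0$, this shows $\mu$ is a genuine measure on $(X,\mathcal{F})$.
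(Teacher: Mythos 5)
Your proposal is correct, but there is nothing in the paper to compare it against: the paper states this theorem (Carath\'eodory's construction) without proof, treating it as a standard preliminary and deferring to the classic measure-theory texts cited at the start of Section~\ref{sec:measure}. What you have written is precisely the classical argument from those references, and all of its essential steps are sound: the reduction via subadditivity to the single inequality $\overbar\mu(E) \geq \overbar\mu(E\cap A)+\overbar\mu(E\setminus A)$; closure under complements from the symmetry of the splitting condition; the two-set computation for finite unions (split $E$ by $A$, split $E\setminus A$ by $B$, reassemble the two pieces of $E\cap(A\cup B)$ by subadditivity); the disjointification $A_n \mapsto A_n\setminus\bigcup_{k<n}A_k$, which legitimately stays in $\mathcal{F}$ by the finite-case closure properties; the inductive identity $\overbar\mu(E\cap B_N)=\sum_{n\leq N}\overbar\mu(E\cap A_n)$, which works exactly as you describe by testing the measurable set $B_{N-1}$ against $E\cap B_N$ and using disjointness; and the final passage where you drop $\overbar\mu(E\setminus B_N)\geq\overbar\mu(E\setminus A)$ by monotonicity \emph{before} letting $N\to\infty$ and only then invoke countable subadditivity. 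You are right that this ordering is the one genuinely delicate point, since $\overbar\mu$ is merely subadditive. Your extraction of countable additivity of the restriction by setting $E=A$ in the limiting inequality is also the standard and correct conclusion, so your proof fills, correctly and completely, a gap the paper deliberately leaves to the references.
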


At this point, it is fair to ask about the structure of the $\sigma$-algebra that is generated by any given outer measure upon an application of Theorem~\ref{theorem:outer-to-measure}. In particular, since metric spaces $X$ will be our primary spaces of interest in the following section, we would like to get hold of outer measures $\overbar \mu$ on $X$ which are good enough to guarantee that the corresponding family of all $\overbar \mu$-measurable sets at least contains the usual Borel sets generated by the standard metric topology on $X$. This is not the case in general, for outer measures supplied by Theorem~\ref{theorem:outer-1} above, see \cite[Section 5.4]{edgar2008measure} for a counterexample. Fortunately, Theorem~\ref{theorem:outer-1} can be slightly tweaked to remedy this flaw.

\begin{theorem}\label{theorem:outer-2}
Let $X$ be a metric space and $\mathcal{A}$ be a family of subsets of $X$ such that for every $x\in X$ and $\epsilon >0$, there exists $A\in \mathcal{A}$ with $\operatorname{diam}A\leq\epsilon$ such that $x\in A$. Let $f:\mathcal{A}\rightarrow [0,\infty]$ be a candidate function for the measure. Now, define $\mathcal{A}_\epsilon \coloneqq \{A\in \mathcal{A} : \operatorname{diam}A \leq \epsilon \}$ and let $\overbar \mu_\epsilon$ be the outer measure defined using the family $\mathcal{A}_\epsilon$ and the function $f$, as in Theorem~\ref{theorem:outer-1}. Finally, define \begin{equation*}
    \overbar \mu (B) = \lim_{\epsilon \rightarrow 0} \overbar \mu_\epsilon (B) \qquad \forall B\subseteq X
\end{equation*}
Then, $\overbar \mu$ is an outer measure on $X$. Moreover, all Borel subsets of $X$ are $\overbar \mu$-measurable.
\end{theorem}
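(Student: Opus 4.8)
The plan is to verify in turn that $\overbar\mu$ is well-defined, that it is an outer measure, and then—this being the heart of the matter—that it satisfies a \emph{metric additivity} property which, by a Carath\'eodory-type argument, forces every Borel set to be $\overbar\mu$-measurable. First I would observe that the defining limit always exists: as $\epsilon$ decreases the family $\mathcal{A}_\epsilon$ shrinks, so there are fewer admissible covers of any $B$, and hence the infimum $\overbar\mu_\epsilon(B)$ supplied by Theorem~\ref{theorem:outer-1} can only increase. Thus $\epsilon\mapsto\overbar\mu_\epsilon(B)$ is non-increasing and $\overbar\mu(B)=\lim_{\epsilon\to 0}\overbar\mu_\epsilon(B)=\sup_{\epsilon>0}\overbar\mu_\epsilon(B)$ exists in $[0,\infty]$, with $\overbar\mu_\epsilon(B)\leq\overbar\mu(B)$ for every $\epsilon$. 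The three outer-measure axioms then follow routinely: $\overbar\mu(\emptyset)=0$ and monotonicity pass directly to the limit from the corresponding properties of each $\overbar\mu_\epsilon$, while for countable subadditivity one bounds $\overbar\mu_\epsilon(\bigcup_n A_n)\leq\sum_n\overbar\mu_\epsilon(A_n)\leq\sum_n\overbar\mu(A_n)$ for each fixed $\epsilon$ and then lets $\epsilon\to 0$.

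The substance lies entirely in the measurability claim, and the key intermediate step is to show that $\overbar\mu$ is a \emph{metric outer measure}: whenever $A,B\subseteq X$ satisfy $\delta\coloneqq\inf\{d(a,b):a\in A,\,b\in B\}>0$ (with $d$ the metric), one has $\overbar\mu(A\cup B)=\overbar\mu(A)+\overbar\mu(B)$. The inequality $\leq$ is just subadditivity. For $\geq$, I would fix $\epsilon<\delta$ and take any countable cover $\mathcal{C}\subseteq\mathcal{A}_\epsilon$ of $A\cup B$. Since every $C\in\mathcal{C}$ has $\operatorname{diam}C\leq\epsilon<\delta$, no single $C$ can meet both $A$ and $B$; hence the subfamilies $\mathcal{C}_A,\mathcal{C}_B\subseteq\mathcal{C}$ of sets meeting $A$, respectively $B$, are disjoint and cover $A$, respectively $B$. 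This yields $\sum_{C\in\mathcal{C}}f(C)\geq\overbar\mu_\epsilon(A)+\overbar\mu_\epsilon(B)$; taking the infimum over $\mathcal{C}$ and then $\epsilon\to 0$ gives the claim. Note this is precisely where the modification of Theorem~\ref{theorem:outer-1} (restricting to small-diameter sets) is essential, since it is the diameter bound that prevents a covering set from bridging the two pieces.

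Finally I would deduce measurability of all Borel sets from the metric property. Because the $\overbar\mu$-measurable sets form a $\sigma$-algebra by Theorem~\ref{theorem:outer-to-measure}, and the closed sets generate the Borel $\sigma$-algebra, it suffices to prove that every closed set $C$ is measurable, i.e. that $\overbar\mu(E)\geq\overbar\mu(E\cap C)+\overbar\mu(E\setminus C)$ for all $E$ with $\overbar\mu(E)<\infty$ (the reverse being subadditivity). Setting $B_n\coloneqq\{x\in E\setminus C : d(x,C)\geq 1/n\}$, closedness of $C$ gives $B_n\uparrow E\setminus C$, while $d(E\cap C,B_n)\geq 1/n>0$ lets the metric property yield $\overbar\mu(E)\geq\overbar\mu((E\cap C)\cup B_n)=\overbar\mu(E\cap C)+\overbar\mu(B_n)$.

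\textbf{The main obstacle} is the final passage $\overbar\mu(B_n)\to\overbar\mu(E\setminus C)$, since outer measures are \emph{not} continuous from below in general, so this cannot be asserted for free. To overcome it I would decompose the difference into annuli $R_k\coloneqq B_{k+1}\setminus B_k=\{x\in E\setminus C : 1/(k+1)\leq d(x,C)<1/k\}$ and observe that rings of the same parity are pairwise positively separated, since for $j<k$ their distance-to-$C$ values satisfy $d(x,C)\geq 1/(2j+1)>1/(2k)>d(y,C)$, forcing $d(R_{2j},R_{2k})>0$ by the triangle inequality. Repeated application of the metric property then gives $\sum_{k=1}^N\overbar\mu(R_{2k})=\overbar\mu(\bigcup_{k=1}^N R_{2k})\leq\overbar\mu(E\setminus C)<\infty$, and likewise for odd indices, so $\sum_k\overbar\mu(R_k)<\infty$. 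Since $E\setminus C=B_n\cup\bigcup_{k\geq n}R_k$, subadditivity gives $\overbar\mu(E\setminus C)\leq\overbar\mu(B_n)+\sum_{k\geq n}\overbar\mu(R_k)$, and the convergent tail forces $\overbar\mu(B_n)\to\overbar\mu(E\setminus C)$. Letting $n\to\infty$ in the inequality of the previous paragraph then completes the proof.
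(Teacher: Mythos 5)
Your proof is correct, and it is worth noting that the paper itself never proves this theorem: it is stated as a standard measure-theoretic preliminary (Carath\'eodory's ``Method II'' construction), with the reader referred to the literature. The only fragment of it the paper does prove is the special case of the first claim for the Hausdorff measures in Section~\ref{sec:hausdorff}, and there your first paragraph coincides with the paper's argument exactly: monotonicity of $\epsilon\mapsto\overbar\mu_\epsilon(B)$ (fewer admissible covers as $\epsilon$ shrinks), hence existence of the limit as a supremum, followed by passing the three outer-measure axioms through the limit. Everything beyond that---the entire Borel-measurability claim---is content the paper omits, and you supply the standard and correct route: show $\overbar\mu$ is a \emph{metric} outer measure (this is indeed precisely where the restriction to $\mathcal{A}_\epsilon$ with $\epsilon<\operatorname{dist}(A,B)$ is used, as you observe, since no covering set of small diameter can meet both pieces), then invoke Carath\'eodory's criterion, reducing via Theorem~\ref{theorem:outer-to-measure} to the measurability of closed sets.

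You also correctly identify and repair the one genuinely delicate point, which weaker write-ups often gloss over: one cannot pass from $\overbar\mu(B_n)$ to $\overbar\mu(E\setminus C)$ by ``continuity from below,'' since arbitrary outer measures need not have that property. Your annuli decomposition $R_k=B_{k+1}\setminus B_k$, the positive separation of same-parity rings (with the correct bound $\operatorname{dist}(R_{2j},R_{2k})\geq 1/(2j+1)-1/(2k)>0$ for $j<k$), the resulting convergence of $\sum_k\overbar\mu(R_k)$ under the finiteness assumption on $\overbar\mu(E)$, and the tail estimate via subadditivity constitute exactly the classical argument. The finiteness reduction is also handled properly, since the Carath\'eodory inequality is vacuous when $\overbar\mu(E)=\infty$. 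In short: where the paper defers to external references, your proposal is a self-contained and correct proof.
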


Let us pause for a moment to appreciate the content of the above theorem. Starting from a simple initial candidate function $f$ defined on some suitable family of subsets of $X$, we now have a systematic procedure to construct a rich $\sigma$-algebra $\mathcal{F}$ containing all the Borel sets and a measure $\mu:\mathcal{F}\rightarrow [0,\infty]$ on $X$. This procedure will be at the heart of the definition of the Hausdorff measure in the next section. 

\begin{remark}[Lebesgue measure]
Before concluding this section, let us remark that the Lebesgue measure on $\mathbb{R}^k$ is constructed by applying Theorems~\ref{theorem:outer-1} and \ref{theorem:outer-to-measure} on the family of all half-open rectangles of the form 
\begin{equation*}
    R = (a_1,b_1]\times (a_2,b_2]\times \ldots \times (a_k,b_k] \subseteq \mathbb{R}^k
\end{equation*}
and the candidate function $f(R)=\prod_{i=1}^k (b_i-a_i)$, where $a_i<b_i$ for each $i$. It can be shown that all Borel sets in $\mathbb{R}^k$ are also Lebesgue measurable (notice that in this special case, the construction elaborated in Theorem~\ref{theorem:outer-2} in not required).
\end{remark}

\section{Hausdorff dimension} \label{sec:hausdorff}
Equipped with all the required measure-theoretic tools from the previous section, we can now begin to develop the notion of Hausdorff dimension for subsets of an arbitrary metric space $X$. The process unfolds in two steps. As was remarked in the introduction, the first (and the most challenging step) is to define a sound notion of $\delta$-measurements which can be used to quantify the size of an object for each $\delta>0$. The previous section informs us that measures are built to fulfil this precise function. However, as noted before, defining a measure from scratch can be an arduous task, so we start by formulating the definition of $\delta$-\emph{Hausdorff outer measures}. Note that unless stated otherwise, $X$ will always denote a metric space in this section. Recall that a metric space $X$ is a set equipped with a real valued function $d$ on $X\times X$ such that $d(x,y)\geq 0$, $d(x,y)=d(y,x)$, $d(x,y) \leq d(x,z)+d(z,y)$ for all $x,y,z\in X$ and $d(x,y)=0$ if and only if $x=y$. The diameter of a subset $A\subseteq X$ is defined as $\operatorname{diam}A \coloneqq \operatorname{sup}\{ d(x,y) : x,y\in A \}$. For Euclidean spaces $X=\mathbb{R}^k$, we use the standard metric $d(x,y)=||x-y||=\sqrt{\sum_{i=1}^k (x_i-y_i)^2 }$.

\begin{definition}
Let $X$ be a metric space and $\mathcal{A}=\mathcal{P}(X)$ be the collection of all subsets of $X$. For each $\delta>0$, let $f_\delta:\mathcal{A}\rightarrow [0,\infty]$ be the candidate function defined as $f(A)=(\operatorname{diam}A)^\delta \,\,\, \forall A\subseteq X$. Then, the $\delta$-\emph{Hausdorff outer measure} $\overbar H^\delta$ is constructed by applying Theorem~\ref{theorem:outer-2} on $X, \mathcal{A}$ and $f_\delta$.
\end{definition}

Using Theorems~\ref{theorem:outer-1} and \ref{theorem:outer-2}, we spell out the above definition in a bit more detail. For every $\epsilon, \delta >0$, we define the family of subsets $\mathcal{A}_\epsilon = \{ A\subseteq X : \operatorname{diam}A \leq \epsilon \}$ and set functions $f_\delta : \mathcal{A}_\epsilon \rightarrow [0,\infty]$ such that $f_\delta (A) = (\operatorname{diam}A)^\delta \,\,\, \forall A\in \mathcal{A}_\epsilon$. Now, the outer measure $\overbar H^\delta_\epsilon$ on $X$ is defined by applying Theorem~\ref{theorem:outer-1} on the set family $\mathcal{A}_\epsilon$ and the set function $f_\delta : \mathcal{A}_\epsilon \mapsto [0,\infty]$. More precisely, 
\begin{equation}
\forall B\subseteq X, \qquad  \overbar H^\delta_\epsilon (B) = \inf_{\mathcal{C}\subseteq \mathcal{A}_\epsilon} \sum_{A\in \mathcal{C}} (\operatorname{diam}A)^\delta
\end{equation}
where the infimum is taken over all countable $\epsilon$-covers $\mathcal{C}$ of $B$, i.e. over all countable covers $\mathcal{C}\subseteq \mathcal{A}_\epsilon$ of $B$. The $\delta$-Hausdorff outer measure on $X$ is then defined for all $B\subseteq X$ as the following limit:
\begin{equation}
    \overbar H^\delta (B) = \lim_{\epsilon\rightarrow 0} \overbar H^\delta_\epsilon (B) = \sup_{\epsilon>0} \overbar H^\delta_\epsilon (B) \in [0,\infty]
\end{equation} 

Although Theorem~\ref{theorem:outer-2} guarantees that $\overbar H^\delta$ is an outer measure for each $\delta>0$, it is instructive to look at the proof once. We begin by certifying the well-definition of $\overbar H^\delta$. If $0<\epsilon_1 \leq \epsilon_2$, it is easy to see that $A_{\epsilon_1}\subseteq A_{\epsilon_2}$. Hence, there are more permissible covers in the definition of $\overbar H^\delta_{\epsilon_2}$ when compared to $\overbar H^\delta_{\epsilon_1}$, which allows the infimum to decrease. Thus, for arbitrary $B\subseteq X$, we have $\overbar H^\delta_{\epsilon_2} (B) \leq \overbar H^\delta_{\epsilon_1}(B)$, which implies that $\overbar H^\delta_{\epsilon} (B)$ is a non-increasing function of $\epsilon\in (0,\infty)$. Now, if $\{\epsilon_n \}_{n\in \mathbb{N}}\subseteq \mathbb{R}$ converges to zero, then $\{\overbar H^\delta_{\epsilon_n}(B) \}_{n\in \mathbb{N}}$ is monotonically increasing and hence either converges to some value in $[0,\infty)$ or diverges to $\infty$.  
It is thus clear that $\overbar H^\delta$ is well defined. We now prove that $\overbar H^\delta$ is indeed an outer measure.

\begin{theorem}
For every $\delta > 0$, $\overbar H^\delta$ is an outer measure on $X$.
\end{theorem}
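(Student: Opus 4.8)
The plan is to verify the three defining axioms of an outer measure directly from the definition of $\overbar H^\delta$, establishing each at the level of the approximating measures $\overbar H^\delta_\epsilon$ and then passing to the limit $\epsilon \to 0$. Since the well-definedness of the limit has already been argued, the remaining task is to check: $\overbar H^\delta(\emptyset) = 0$, monotonicity, and countable sub-additivity.

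\medskip

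First I would dispose of the easy axioms. For the empty set, the empty cover (or a single set of arbitrarily small diameter) gives $\overbar H^\delta_\epsilon(\emptyset) = 0$ for every $\epsilon$, hence the limit vanishes. For monotonicity, I would observe that if $A \subseteq B$, then every $\epsilon$-cover of $B$ is automatically an $\epsilon$-cover of $A$, so the infimum defining $\overbar H^\delta_\epsilon(A)$ is taken over a larger collection of covers and therefore $\overbar H^\delta_\epsilon(A) \leq \overbar H^\delta_\epsilon(B)$; taking $\epsilon \to 0$ preserves the inequality. (Alternatively, monotonicity is inherited directly from the fact that each $\overbar H^\delta_\epsilon$ is an outer measure by Theorem~\ref{theorem:outer-1}.)

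\medskip

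The substantive step is countable sub-additivity. Given $\{B_n\}_{n \in \mathbb{N}}$ with union $B = \bigcup_n B_n$, I would first prove sub-additivity for each fixed $\epsilon$ via the standard $2^{-n}\eta$ trick: fix $\eta > 0$, and for each $n$ choose a countable $\epsilon$-cover $\mathcal{C}_n$ of $B_n$ with $\sum_{A \in \mathcal{C}_n} (\operatorname{diam}A)^\delta \leq \overbar H^\delta_\epsilon(B_n) + \eta \, 2^{-n}$. The union $\bigcup_n \mathcal{C}_n$ is then a countable $\epsilon$-cover of $B$, whence $\overbar H^\delta_\epsilon(B) \leq \sum_n \overbar H^\delta_\epsilon(B_n) + \eta$; letting $\eta \to 0$ yields $\overbar H^\delta_\epsilon(B) \leq \sum_n \overbar H^\delta_\epsilon(B_n)$. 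The main obstacle, and the point deserving care, is the passage to the limit: the limiting inequality $\overbar H^\delta(B) \leq \sum_n \overbar H^\delta(B_n)$ must be extracted without the limit and the infinite sum obstructing each other. The clean route is to use that $\overbar H^\delta_\epsilon(B_n) \leq \overbar H^\delta(B_n)$ for every $\epsilon$ (since $\overbar H^\delta$ is the supremum over $\epsilon$), so that already $\overbar H^\delta_\epsilon(B) \leq \sum_n \overbar H^\delta(B_n)$ holds for every $\epsilon$; the right-hand side is now independent of $\epsilon$, so taking the supremum over $\epsilon$ on the left gives the desired conclusion without any delicate interchange of $\lim_\epsilon$ and $\sum_n$. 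This sidesteps the only genuine difficulty and completes the verification that $\overbar H^\delta$ is an outer measure.
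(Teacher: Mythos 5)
Your proposal is correct and follows essentially the same route as the paper's own proof: establish the axioms for each $\overbar H^\delta_\epsilon$ via the $2^{-n}$-weighted choice of near-optimal covers, then bound $\overbar H^\delta_\epsilon(B_n)$ by $\overbar H^\delta(B_n)$ so that the $\epsilon$-limit can be taken against an $\epsilon$-independent right-hand side. The careful handling of the limit-versus-sum interchange that you highlight is precisely the step the paper's proof performs as well.
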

\begin{proof}
We begin by showing that for each fixed $\epsilon > 0$, $\overbar H^\delta_\epsilon$ is an outer measure. It is clear from the definition that $\overbar H^\delta_\epsilon (\emptyset) = 0$ and that $\overbar H^\delta_\epsilon$ is monotonous (if $A\subseteq B$, $\overbar H^\delta_\epsilon (A) \leq \overbar H^\delta_\epsilon (B)$). To show countable sub-additivity, we consider an arbitrary sequence $\{B_n \}_{n\in \mathbb{N}}$ of subsets of $X$ and let $\gamma > 0$. For each $n\in \mathbb{N}$, the definition of infimum allows us to choose a countable $\epsilon$-cover $\mathcal{C}_n$ of $B_n$ such that
\begin{equation} \label{eq:1}
\sum_{A\in \mathcal{C}_n} (\operatorname{diam}A)^\delta \leq \overbar H^\delta_\epsilon (B_n) + \frac{\gamma}{2^n}.
\end{equation} 
Now, notice that 
$$ B_n \subseteq \bigcup_{A\in \mathcal{C}_n} A \implies \bigcup_{n\in \mathbb{N}} B_n \subseteq \bigcup_{n\in \mathbb{N}} \bigcup_{A\in \mathcal{C}_n} A. $$ 
Hence, we obtain
\begin{align*}
    \overbar H^\delta_\epsilon (\bigcup_{n\in \mathbb{N}} B_n) \leq \overbar H^\delta_\epsilon (\bigcup_{n\in \mathbb{N}} \bigcup_{A\in \mathcal{C}_n} A) &\leq \sum_{n=1}^\infty \sum_{A\in \mathcal{C}_n} (\operatorname{diam}A)^\delta \\
    &\leq \sum_{n=1}^\infty (\overbar H^\delta_\epsilon (B_n) + \frac{\gamma}{2^n}) \\ 
    &= \sum_{n=1}^\infty \overbar H^\delta_\epsilon (B_n) + \gamma
\end{align*}
where, the first inequality follows from the monotonicity of $\overbar H^\delta_\epsilon$, the second follows from the definition of $\overbar H^\delta_\epsilon$ and the rest are consequences of the above inequality Eq.~\eqref{eq:1}. Since the above inequality holds for all $\gamma>0$, we infer that $\overbar H^\delta_\epsilon (\bigcup_{n\in \mathbb{N}} B_n ) \leq \sum_{n\in \mathbb{N}} \overbar H^\delta_\epsilon (B_n)$.

Now, we show that $\overbar H^\delta$ is an outer measure. As before, $\overbar H^\delta (\emptyset) = 0$ and monotonicity trivially follows from the definition. To show countable sub-addivity, we use the recently shown countable sub-additivity of $\overbar H^\delta_\epsilon$ and the fact that $\overbar H^\delta (B_n) = \sup_{\epsilon>0} \overbar H^\delta_\epsilon (B_n) \geq \overbar H^\delta_\epsilon (B_n) \,\, \forall n\in \mathbb{N}$ to deduce that
\begin{align*}
    \overbar H^\delta_\epsilon ( \bigcup_{n\in \mathbb{N}} B_n) \leq \sum_{n=1}^\infty \overbar H^\delta_\epsilon (B_n) \leq \sum_{n=1}^\infty \overbar H^\delta (B_n)
\end{align*}
Taking the limit $\epsilon \rightarrow 0$ then yields the desired inequality, thus finishing the proof.
\end{proof}

Now that we know that $\overbar H^\delta$ is an outer measure on $X$ for each $\delta > 0$, we can immediately obtain the corresponding $\delta$-\emph{Hausdorff measure} $H^\delta$ on the $\sigma$-algebra of $\overbar H^\delta$-measurable sets in $X$ by using Theorem~\ref{theorem:outer-to-measure}. Moreover, Theorem~\ref{theorem:outer-2} 
guarantees that the $\overbar H^\delta$-measurable sets are sufficiently rich in the sense that all Borel subsets in $X$ are $\overbar H^\delta$-measurable. 

\begin{remark} \label{remark:Hausdorff-Lebesgue}
The $k$-dimensional Lebesgue measure and the $k$-dimensional Hausdorff measure on $\mathbb{R}^k$ are equivalent, upto multiplication by a constant, see \cite[Theorem 6.3.6]{edgar2008measure}.
\end{remark}

In the next lemma, we describe a very useful scaling property of Hausdorff measures.

\begin{lemma} \label{lemma:hausdorff-scaling}
Let $B\subseteq \mathbb{R}^k$ and let $S:B\rightarrow \mathbb{R}^l$ be \emph{Lipschitz}, i.e. there exists a $\lambda>0$ such that $||S(x)-S(y)||\leq \lambda||x-y|| \,\,\, \forall x,y\in B$.
Then for all $ \delta >0$,
$$\overbar H^\delta (S(B)) \leq \lambda^\delta \overbar H^\delta (B),$$ where $S(B) \coloneqq \{ S(x): x\in B \}\subseteq \mathbb{R}^l$.
\end{lemma}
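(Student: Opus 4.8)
The plan is to exploit the definition of the Hausdorff outer measure directly by showing that any $\epsilon$-cover of $B$ can be pushed forward through the Lipschitz map $S$ to produce a cover of $S(B)$ whose diameters are controlled by the factor $\lambda$. The key observation is the effect of a Lipschitz map on diameters: if $A \subseteq B$, then $S(A) \subseteq \mathbb{R}^l$ satisfies $\operatorname{diam}S(A) \leq \lambda \operatorname{diam}A$. This follows immediately from the Lipschitz inequality, since for any $x,y \in A$ we have $\|S(x)-S(y)\| \leq \lambda\|x-y\| \leq \lambda \operatorname{diam}A$, and taking the supremum over $x,y \in A$ gives the bound. Raising to the $\delta$-th power yields $(\operatorname{diam}S(A))^\delta \leq \lambda^\delta (\operatorname{diam}A)^\delta$, which is the heart of the estimate.

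With this in hand, I would proceed at the level of the $\epsilon$-approximants $\overbar H^\delta_\epsilon$ and then pass to the limit. Fix $\epsilon > 0$ and let $\mathcal{C} = \{A_n\}$ be any countable $\epsilon$-cover of $B$. Since $B \subseteq \bigcup_n A_n$, applying $S$ gives $S(B) \subseteq \bigcup_n S(A_n)$, so $\{S(A_n)\}$ is a countable cover of $S(B)$. Each covering set satisfies $\operatorname{diam}S(A_n) \leq \lambda \operatorname{diam}A_n \leq \lambda\epsilon$, so $\{S(A_n)\}$ is a $\lambda\epsilon$-cover of $S(B)$. Hence, using the infimum definition of $\overbar H^\delta_{\lambda\epsilon}$ together with the diameter estimate,
\begin{equation*}
\overbar H^\delta_{\lambda\epsilon}(S(B)) \leq \sum_n (\operatorname{diam}S(A_n))^\delta \leq \lambda^\delta \sum_n (\operatorname{diam}A_n)^\delta.
\end{equation*}
Taking the infimum over all $\epsilon$-covers $\mathcal{C}$ of $B$ on the right-hand side yields $\overbar H^\delta_{\lambda\epsilon}(S(B)) \leq \lambda^\delta\, \overbar H^\delta_\epsilon(B)$.

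Finally, I would take the limit $\epsilon \to 0$. Since $\lambda > 0$ is fixed, $\lambda\epsilon \to 0$ as $\epsilon \to 0$, so the left-hand side converges to $\overbar H^\delta(S(B))$ while the right-hand side converges to $\lambda^\delta\, \overbar H^\delta(B)$, giving the claimed inequality. One minor technical point worth flagging concerns sets $A$ whose image under $S$ is not literally a subset available in the covering family, but since the Hausdorff construction uses \emph{all} subsets of the target space as its candidate family, $S(A_n)$ is automatically an admissible covering set in $\mathbb{R}^l$, so no difficulty arises there.

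I do not expect a genuine obstacle in this proof; the argument is essentially a bookkeeping exercise once the diameter-scaling estimate is isolated. The only subtlety requiring care is the relationship between the scales $\epsilon$ and $\lambda\epsilon$: the pushed-forward cover is a $\lambda\epsilon$-cover rather than an $\epsilon$-cover, so one must compare $\overbar H^\delta_{\lambda\epsilon}(S(B))$ against $\overbar H^\delta_\epsilon(B)$ at matched scales before letting both tend to zero. Keeping the two scale parameters properly aligned throughout is the one place where a careless argument could go wrong, but it resolves cleanly upon taking the limit.
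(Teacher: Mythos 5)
Your proposal is correct and follows essentially the same route as the paper: push an $\epsilon$-cover of $B$ forward through $S$ to get a $\lambda\epsilon$-cover of $S(B)$, use the diameter estimate $\operatorname{diam}S(A)\leq \lambda\operatorname{diam}A$ to bound the sum, take the infimum over covers to get $\overbar H^\delta_{\lambda\epsilon}(S(B)) \leq \lambda^\delta\, \overbar H^\delta_\epsilon(B)$, and let $\epsilon \rightarrow 0$. If anything, your write-up is slightly more careful than the paper's (which denotes the image sets loosely by $\lambda A_n$), though both treatments silently replace $A_n$ by $A_n \cap B$ when applying $S$, a harmless point since diameters only shrink under intersection.
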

\begin{proof}
If $\mathcal{C}=\{A_n\}_{n\in \mathbb{N}}\subseteq \mathcal{A}_\epsilon$ is a countable $\epsilon$-cover of $B$, then $\lambda \mathcal{C} = \{\lambda A_n\}_{n\in \mathbb{N}}\subseteq \mathcal{A}_{\lambda\epsilon}$ is a countable $\lambda\epsilon$-cover for $S(B)$. Hence, we have $\overbar H^\delta_{\lambda\epsilon} (S(B)) \leq \sum_{n=1}^\infty [\operatorname{diam}(\lambda A_n)]^\delta = \lambda^\delta \sum_{n=1}^\infty (\operatorname{diam}A_n)^\delta $. Since this is true for all countable $\epsilon$-covers $\mathcal{C}\subseteq \mathcal{A}_\epsilon$ of $B$, taking infimum yields $\overbar H^\delta_{\lambda\epsilon} (S(B)) \leq \lambda^\delta \overbar H^\delta_{\epsilon} (B)$. Taking limit $\epsilon \rightarrow 0$ then proves the desired result.
\end{proof}

\begin{corollary} \label{corollary:hausdorff-scaling}
Let $S:\mathbb{R}^k\rightarrow \mathbb{R}^k$ be a \emph{similarity transformation}, i.e. $\forall x,y\in \mathbb{R}^k \,\,\,$,  $||S(x)-S(y)|| = \lambda||x - y||$ for some $\lambda >0$. Then, $\overbar H^\delta (S(B))= \lambda^\delta \overbar H^\delta (B)$ for all $\delta>0$ and $B\subseteq \mathbb{R}^k$.
\end{corollary}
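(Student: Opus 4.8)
The plan is to derive the claimed equality by proving the two inequalities separately, obtaining each one directly from Lemma~\ref{lemma:hausdorff-scaling}.

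First I would observe that a similarity transformation $S$ is in particular Lipschitz: the defining relation $\|S(x)-S(y)\|=\lambda\|x-y\|$ gives exactly the Lipschitz bound $\|S(x)-S(y)\|\le \lambda\|x-y\|$, with the constant merely saturated rather than violated. Applying Lemma~\ref{lemma:hausdorff-scaling} to $S$ restricted to $B$ therefore yields the first inequality
$$\overbar H^\delta(S(B)) \le \lambda^\delta\, \overbar H^\delta(B).$$

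For the reverse inequality I would exploit the invertibility of $S$. Since $\lambda>0$, the relation $\|S(x)-S(y)\|=\lambda\|x-y\|$ forces $S(x)=S(y)\Rightarrow x=y$, so $S$ is injective and hence a bijection onto its image $S(\mathbb{R}^k)$. Its inverse $S^{-1}\colon S(\mathbb{R}^k)\to\mathbb{R}^k$ then satisfies $\|S^{-1}(u)-S^{-1}(v)\|=\tfrac{1}{\lambda}\|u-v\|$ for all $u,v$ in the image, so in particular $S^{-1}$ is itself Lipschitz with constant $1/\lambda$. Applying Lemma~\ref{lemma:hausdorff-scaling} to $S^{-1}$ restricted to the set $S(B)\subseteq\mathbb{R}^k$ gives
$$\overbar H^\delta\!\big(S^{-1}(S(B))\big) \le \lambda^{-\delta}\,\overbar H^\delta(S(B)).$$
Because $S$ is injective we have $S^{-1}(S(B))=B$, so after rearranging this reads $\lambda^\delta\,\overbar H^\delta(B)\le \overbar H^\delta(S(B))$.

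Combining the two displayed inequalities immediately yields $\overbar H^\delta(S(B))=\lambda^\delta\,\overbar H^\delta(B)$, as desired. There is essentially no hard step here: the entire content is already packaged inside Lemma~\ref{lemma:hausdorff-scaling}, and the only point that requires a moment's care is that the reverse bound must be extracted by applying the lemma to the \emph{inverse} map rather than to $S$ itself. This move is legitimate precisely because a similarity transformation, unlike a general Lipschitz map, is invertible with a Lipschitz inverse of reciprocal constant $1/\lambda$ — which is exactly the feature distinguishing this corollary from the one-sided bound of the lemma.
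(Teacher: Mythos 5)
Your proof is correct and follows essentially the same route as the paper: apply Lemma~\ref{lemma:hausdorff-scaling} once to $S$ and once to $S^{-1}$, and combine the two inequalities. Your added care in defining $S^{-1}$ only on the image $S(\mathbb{R}^k)$ is a nice touch (it sidesteps any question of surjectivity of $S$), but the argument is otherwise identical to the paper's.
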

\begin{proof}
Observe that since $S$ is a similarity transformation, we have $||S^{-1}(x)-S^{-1}(y)||= \lambda^{-1}||x-y||$ for all $x,y\in \mathbb{R}^k$. The desired result is then obtained by applying Lemma~\ref{lemma:hausdorff-scaling} on $S$ and $S^{-1}$.
\end{proof}

Before proceeding to the second and final step in defining the notion of Hausdorff dimension, we must study the behaviour of $\overbar H^\delta (B)$ as a function of $\delta$ for an subset $B\subseteq X$. 

\begin{lemma}
Let $X$ be a metric space, $B\subseteq X$ and $0<\delta_1 < \delta_2$. Then,
\begin{itemize}
    \item $\overbar H^{\delta_1} (B) < \infty \implies \overbar H^{\delta_2}(B)=0$. 
    \item $\overbar H^{\delta_2} (B) > 0 \implies \overbar H^{\delta_1} (B) = \infty$.
\end{itemize}

\end{lemma}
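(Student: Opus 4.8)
The plan is to observe first that the two implications are logically equivalent, so that a single argument suffices. Indeed, the second bullet is precisely the contrapositive of the first: negating the conclusion $\overbar H^{\delta_2}(B)=0$ gives $\overbar H^{\delta_2}(B)>0$, while negating the hypothesis $\overbar H^{\delta_1}(B)<\infty$ gives $\overbar H^{\delta_1}(B)=\infty$ (these are the only two alternatives for a value in $[0,\infty]$). Hence I would establish only the first implication and then deduce the second at no extra cost.

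To prove the first implication, the key is to compare the summands $(\operatorname{diam}A)^{\delta_2}$ and $(\operatorname{diam}A)^{\delta_1}$ on an $\epsilon$-cover. Since any $A$ belonging to a cover $\mathcal{C}\subseteq \mathcal{A}_\epsilon$ satisfies $\operatorname{diam}A\leq \epsilon$, and since $\delta_2-\delta_1>0$, I would factor
$$(\operatorname{diam}A)^{\delta_2} = (\operatorname{diam}A)^{\delta_2-\delta_1}(\operatorname{diam}A)^{\delta_1} \leq \epsilon^{\delta_2-\delta_1}(\operatorname{diam}A)^{\delta_1}.$$
Summing over $A\in \mathcal{C}$ and then taking the infimum over all countable $\epsilon$-covers of $B$ yields the scale-wise inequality $\overbar H^{\delta_2}_\epsilon(B)\leq \epsilon^{\delta_2-\delta_1}\,\overbar H^{\delta_1}_\epsilon(B)$.

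Finally, I would invoke the monotonicity $\overbar H^{\delta_1}_\epsilon(B)\leq \overbar H^{\delta_1}(B)$ recorded in the well-definition discussion (where $\overbar H^{\delta}(B)=\sup_{\epsilon>0}\overbar H^{\delta}_\epsilon(B)$), so that
$$\overbar H^{\delta_2}_\epsilon(B)\leq \epsilon^{\delta_2-\delta_1}\,\overbar H^{\delta_1}(B).$$
Since $\overbar H^{\delta_1}(B)<\infty$ by hypothesis and $\epsilon^{\delta_2-\delta_1}\to 0$ as $\epsilon\to 0$ (using $\delta_2-\delta_1>0$), letting $\epsilon\to 0$ forces $\overbar H^{\delta_2}(B)=\lim_{\epsilon\to 0}\overbar H^{\delta_2}_\epsilon(B)=0$, as required.

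As for the main obstacle, the argument is largely mechanical, and the only point needing genuine care is to keep $\overbar H^{\delta_1}(B)$ fixed as an $\epsilon$-independent finite bound: this is exactly what allows the decaying factor $\epsilon^{\delta_2-\delta_1}$ to annihilate the product in the limit. It is also worth being mindful that the relevant infima are effectively taken over covers with finite $\delta_1$-sum, which is legitimate precisely because the finiteness of $\overbar H^{\delta_1}(B)$ guarantees that such covers exist at every scale $\epsilon$.
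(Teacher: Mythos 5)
Your proof is correct and follows essentially the same route as the paper's: the factorization $(\operatorname{diam}A)^{\delta_2}\leq \epsilon^{\delta_2-\delta_1}(\operatorname{diam}A)^{\delta_1}$, the scale-wise inequality $\overbar H^{\delta_2}_\epsilon(B)\leq \epsilon^{\delta_2-\delta_1}\overbar H^{\delta_1}_\epsilon(B)$, and the limit $\epsilon\rightarrow 0$. Your explicit observation that the second bullet is the contrapositive of the first is a slightly cleaner disposal of it than the paper's ``follows similarly,'' but it is not a different argument.
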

\begin{proof}
For every $\epsilon >0$ and $A\subseteq X$ with $\operatorname{diam}A\leq \epsilon$, we have 
$$ |A|^{\delta_2} = |A|^{\delta_2 - \delta_1} |A|^{\delta_1} \leq \epsilon^{\delta_2 - \delta_1} |A|^{\delta_1}.$$ 
Hence, for $B\subseteq X$, we infer that $\overbar H^{\delta_2}_\epsilon (B) \leq \epsilon^{\delta_2 - \delta_1} \overbar H^{\delta_1}_\epsilon (B)$. Now, if $H^{\delta_1} (B) < \infty$, the desired conclusion follows immediately by taking the limit $\epsilon \rightarrow 0$. The second conclusion follows similarly.
\end{proof}

The above theorem tells us that for every $B\subseteq X$, there is a critical value of $\delta$ at which $\overbar H^\delta (B)$ changes from $\infty$ to $0$, see Figure~\ref{fig:Hdim}. More precisely, there exists a \emph{unique} $d\in [0,\infty]$ such that $$\overbar H^\delta (B) = \infty \quad \forall \,\delta < d \qquad \text{and} \qquad \overbar H^\delta (B)=0 \quad \forall \, \delta > d.$$ For $\delta = d$, $\overbar H^\delta (B)$ can take any value from $0$ to $\infty$. We define this critical value as the \emph{Hausdorff dimension} of $B$.

\begin{definition}\label{def:Hdim}
Let $B\subseteq X$ be a subset of a metric space $X$. Then, the \emph{Hausdorff dimension} of $B$ is defined as 
\begin{equation*}
    \operatorname{dim}_H B = \sup \{\delta : \overbar H^\delta (B) = \infty \} = \inf \{\delta : \overbar H^\delta (B) = 0\}
\end{equation*}
\end{definition}

\begin{figure}[H]
    \centering
    \includegraphics{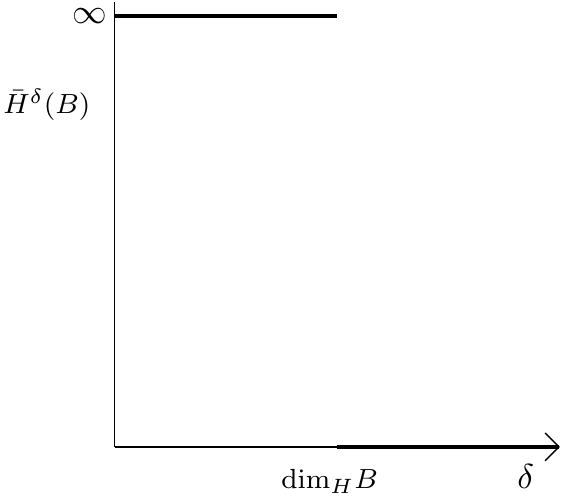}
    \caption{At $\delta=\operatorname{dim}_H B$, the value of $\overbar H^\delta (B)$ drops from infinity to zero.}
    \label{fig:Hdim}
\end{figure}

Let us now pause for a moment to reflect upon the above definition. We are now capable of assigning dimensions to subsets of arbitrary metric spaces in a way which not only generalizes the topological notion of dimension for the usual geometric shapes (see Remark~\ref{remark:haus=topo} in this regard), but at the same time also appreciates the objects' small-scale anatomy by exploiting the underlying distance function. One can think of the Hausdorff dimension as a mathematically rigorous way to encapsulate the traditional intuition that the volume of a surface, for instance, is zero or that the length of a surface is ``infinite". If someone hands you a curve in a plane, it is absurd to talk about its (two-dimensional) area. On the other hand, it is perfectly meaningful to talk about the curve's (one-dimensional) length. The $\delta$-Hausdorff measures capture this idea by allowing one to measure the $\delta$-dimensional size of an object for all $\delta>0$ and not just for integers. In this formulation, the Hausdorff dimension of an object is the right value of $\delta$ such that it becomes meaningful to talk about the $\delta$-dimensional size of the object. Since there is no restriction on $\delta>0$ to be an integer, one often obtains fractional dimensions for numerous exotic self-similar objects. We will investigate this phenomenon more closely in the next section.

To conclude this section, we collect some important properties of the Hausdorff dimension, which can be easily derived from Definition~\ref{def:Hdim} and the properties of Hausdorff measures listed in Remark~\ref{remark:Hausdorff-Lebesgue} and Lemma~\ref{lemma:hausdorff-scaling}.
\begin{theorem}
The Hausdorff dimension enjoys the following properties:
\begin{itemize}
    \item $\operatorname{dim}_H B = 0$ for any countable $B$.
    \item $\operatorname{dim}_H A \leq \operatorname{dim}_H B$ if $A\subseteq B$.
    \item $\operatorname{dim}_H (A\cup B) = \operatorname{max}\{\operatorname{dim}_H A, \operatorname{dim}_H B \}$.
    \item $\operatorname{dim}_H B \leq k$ for all subsets $B\subseteq \mathbb{R}^k$.
     \item $\operatorname{dim}_H f[B] \leq \operatorname{dim}_H B$ for Lipschitz $f:B \mapsto \mathbb{R}^l$, where $B\subseteq \mathbb{R}^k$.
    \item $\operatorname{dim}_H B = k$ for any subset $B\subseteq \mathbb{R}^k$ which has a positive $k$-dimensional Lebesgue measure. In particular, $\operatorname{dim}_H B = k$ for all open sets $B\subseteq \mathbb{R}^k$ and $\operatorname{dim}_H \mathbb{R}^k = k$.
\end{itemize}
\end{theorem}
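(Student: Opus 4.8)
The plan is to read every property off Definition~\ref{def:Hdim}, which characterizes $\operatorname{dim}_H B$ as the threshold $\inf\{\delta : \overbar H^\delta(B) = 0\} = \sup\{\delta : \overbar H^\delta(B) = \infty\}$, combined with the outer-measure axioms (monotonicity and countable subadditivity) and the scaling Lemma~\ref{lemma:hausdorff-scaling}. The recurring mechanism is this: to bound $\operatorname{dim}_H$ above by a value $d$ it suffices to show $\overbar H^\delta = 0$ for every $\delta > d$, while to bound it below by $d$ it suffices to show $\overbar H^\delta = \infty$ (or merely $\overbar H^\delta > 0$) for $\delta < d$.

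For the first three ``soft'' properties I would argue as follows. A singleton $\{x\}$ has $\operatorname{diam}\{x\} = 0$, so $\overbar H^\delta(\{x\}) = 0$ for every $\delta > 0$; countable subadditivity then forces $\overbar H^\delta(B) = 0$ for any countable $B$ and all $\delta > 0$, whence $\operatorname{dim}_H B = 0$. Monotonicity of $\overbar H^\delta$ shows that $\overbar H^\delta(B) = 0$ implies $\overbar H^\delta(A) = 0$ when $A \subseteq B$, so the threshold defining $\operatorname{dim}_H A$ cannot exceed that of $\operatorname{dim}_H B$. For the union, monotonicity already gives ``$\geq$''; for ``$\leq$'' I would fix any $\delta > \max\{\operatorname{dim}_H A, \operatorname{dim}_H B\}$, note that $\overbar H^\delta(A) = \overbar H^\delta(B) = 0$, and invoke subadditivity to get $\overbar H^\delta(A \cup B) = 0$.

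Property~5 is then immediate from Lemma~\ref{lemma:hausdorff-scaling}: if $\delta > \operatorname{dim}_H B$ then $\overbar H^\delta(B) = 0$, and $\overbar H^\delta(f[B]) \leq \lambda^\delta \overbar H^\delta(B) = 0$, so the threshold for $f[B]$ cannot exceed that for $B$. For Property~4 I would run an explicit covering estimate: partition the unit cube $[0,1]^k$ into $m^k$ subcubes of side $1/m$, each of diameter $\sqrt{k}/m$, which yields $\overbar H^\delta_{\sqrt{k}/m}([0,1]^k) \leq m^k(\sqrt{k}/m)^\delta = k^{\delta/2}\,m^{k-\delta} \to 0$ as $m \to \infty$ whenever $\delta > k$; hence $\operatorname{dim}_H[0,1]^k \leq k$. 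Writing $\mathbb{R}^k$ as a countable union of translated unit cubes and using the countable-union analogue of Property~3 (the identical subadditivity argument) extends this to $\operatorname{dim}_H \mathbb{R}^k \leq k$, and monotonicity delivers $\operatorname{dim}_H B \leq k$ for every $B \subseteq \mathbb{R}^k$.

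Finally, Property~6 is where the one genuinely nontrivial input appears. The upper bound $\operatorname{dim}_H B \leq k$ is already furnished by Property~4, so everything reduces to the lower bound $\operatorname{dim}_H B \geq k$. Here I would appeal to Remark~\ref{remark:Hausdorff-Lebesgue}, which identifies $\overbar H^k$ with $k$-dimensional Lebesgue measure up to a positive multiplicative constant: if $B$ has positive Lebesgue measure then $\overbar H^k(B) > 0$, and the preceding dichotomy lemma (applied with $\delta_2 = k$) forces $\overbar H^\delta(B) = \infty$ for every $\delta < k$, so $\operatorname{dim}_H B \geq k$. The two specializations follow because every nonempty open set, and $\mathbb{R}^k$ itself, contains a ball of positive Lebesgue measure. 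I expect this lower bound to be the main obstacle: whereas Properties~1--5 flow formally from the outer-measure axioms and the scaling lemma, establishing $\operatorname{dim}_H B \geq k$ genuinely rests on the comparison between the Hausdorff and Lebesgue measures, whose proof is precisely the content deferred to the cited Remark.
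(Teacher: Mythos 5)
Your proposal is correct, and it follows exactly the route the paper intends: the paper offers no written proof, stating only that the properties "can be easily derived from Definition~\ref{def:Hdim} and the properties of Hausdorff measures listed in Remark~\ref{remark:Hausdorff-Lebesgue} and Lemma~\ref{lemma:hausdorff-scaling}," and your argument fills in precisely those derivations (threshold characterization plus outer-measure axioms for the soft properties, the scaling lemma for Lipschitz images, a cube-covering estimate for the upper bound in $\mathbb{R}^k$, and the Hausdorff--Lebesgue comparison with the dichotomy lemma for the lower bound). Nothing is missing; your identification of the Lebesgue comparison as the only genuinely nontrivial input matches the paper's framing, which likewise defers that point to the cited remark.
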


\begin{remark} \label{remark:haus=topo}
The above theorem tells us that the topological dimension coincides with the Hausdorff dimension for the familiar geometric shapes like lines, planes, disks, cubes, and so on.
\end{remark}

\section{Examples} \label{sec:eg}

In this section, we lay out a general protocol to compute the Hausdorff dimension of a large class of self-similar objects. Before delving into the standard method, it is instructive to first look at the quintessential example of a self-similar set: the \emph{Cantor set}.

\begin{example}[\emph{Cantor set}] \label{eg-cantor}

Let us first recall the definition of the Cantor set. Beginning with the closed unit interval $C_0 = [0,1] \subset \mathbb{R}$, we first delete the open middle third interval $(1/3,2/3)$ from $C_0$ to obtain $C_1 = [0,1/3] \cup [2/3,1]$. Then, the open middle third interval of each of these two closed intervals is deleted to obtain $C_2$. Proceeding in this way, we define $$ C_n = \frac{C_{n-1}}{3} \cup (\frac{2}{3} + \frac{C_{n-1}}{3}) \qquad n\in \mathbb{N}.$$ The Cantor set $C$ is then defined as the infinite intersection $C = \bigcap_{n\in \mathbb{N}} C_n$. \\[0.2cm]
\begin{figure}[H]
    \centering
    \includegraphics[scale=0.8]{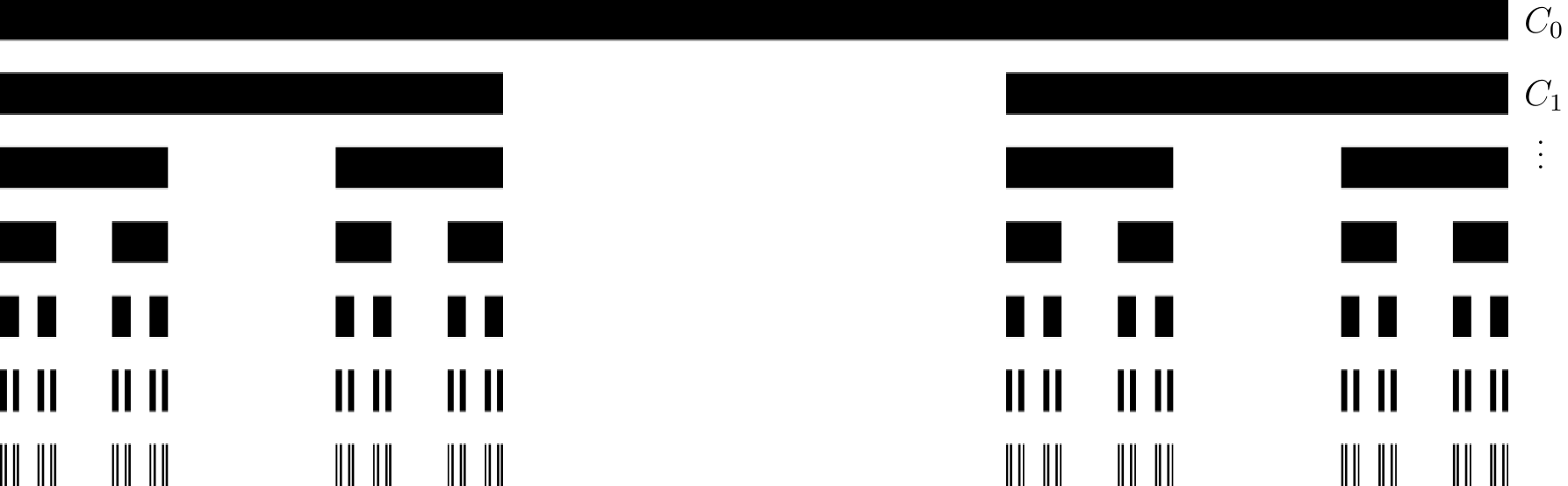}
    \caption{The first few steps in the construction of the Cantor set.}
    \label{fig:my_label}
\end{figure}
Being a countable intersection of closed (Borel) sets, the Cantor set itself is a Borel subset of $\mathbb{R}$ and thus lies in the domain of all $\delta$-Hausdorff measures. Moreover, it is clear that $C$ can be written as the disjoint union $C = C' \cup C''$, where $C' = C \cap [0,1/3] = C/3$ and $C'' = C' + 2/3$. Now, if we consider some $\delta$-Hausdorff measure on $\mathbb{R}$ for $\delta>0$, we can use the additivity (since $H^\delta$ is a measure) and the scaling property of $H^\delta$ (see Corollary~\ref{corollary:hausdorff-scaling}) to write
\begin{align*}
    H^\delta (C) = H^\delta (C') + H^\delta (C'') = H^\delta(C/3) + H^\delta (C/3 + 2/3) = \frac{2}{3^\delta} H^\delta (C)
\end{align*}
Assuming $H^\delta (C)$ is finite, we deduce that $2 = 3^\delta \implies \delta = \log 2 / \log 3$. Indeed, it can be checked that for $\delta = \log2 / \log 3$, $H^\delta (C)=1$. It is hence evident that $\operatorname{dim}_H C = \log 2 / \log 3$. The fractional Hausdorff dimension of the Cantor set tells us that the set is neither a countable collection of points (i.e. it is uncountably infinite) nor is it a continuous curve. It lies somewhere in between these two, hence the fractional dimension. It would be instructive for the reader to show that the topological dimension of $C$ is zero.
\end{example}

Taking inspiration from the above example, we now make the notion of a self-similar set in $\mathbb{R}^k$ more precise as follows. Consider a closed set $D\subseteq \mathbb{R}^k$ and a set of contractions (also called \emph{iterated function systems}) $\{S_i : D\rightarrow D \}_{i=1}^n$ such that $||S_i(x)-S_i(y)|| \leq r_i ||x-y|| \,\, \forall x,y\in D$, where $0<r_i<1$ are known as the \emph{scale factors} of the corresponding contractions $S_i$. Then, a compact set $F\subseteq D$ is said to be an \emph{attractor} if $F=\cup_{i=1}^n S_i(F)$. Since $F$ can be written as a union of shrunken copies of itself, it is clear that $F$ is self-similar. Notice that in Example~\ref{eg-cantor}, if $D$ is any closed set containing the unit interval $[0,1]$ and $S_1 (x) = x/3$, $S_2(x) = x/3 + 2/3 \,\,\, \forall x\in D$, the Cantor set $C$ becomes the unique attractor $C= S_1(C) \cup S_2 (C)$ of the system $\{S_1,S_2 \}$.

Now, the fundamental property of such a set of contractions is that it uniquely determines a non-empty compact set $F\subseteq D$ as its attractor, see the remarkable paper \cite{Hutch1981attractor} or \cite[Theorem 9.1]{falconer1990fractal}. Moreover, if the sets $S_i(F)$ are also pairwise disjoint, then we can apply the reasoning of Example~\ref{eg-cantor} to deduce that,
\begin{equation}
    H^\delta(F) = \sum_{i=1}^n r_i^\delta H^\delta (F).
\end{equation}
Notice that since all compact sets are Borel, $F$ lies in the domain of definition of $H^\delta$ for every $\delta>0$. Moreover, if we assume that $H^\delta(F)$ is finite, we can cancel it from both sides of the above equation to infer that the Hausdorff dimension of $F$ is the unique solution $d$ of the following equation: 
\begin{equation}
\sum_{i=1}^n r_i^d = 1.    
\end{equation}
Now, in real world scenarios, the attractors $F$ don't usually give rise to pairwise disjoint sets $S_i(F)$ and hence the above reasoning cannot be directly applied in such cases. However, one can relax this \emph{exact pairwise disjointness} condition by an \emph{approximate} one, which amounts to saying that for a given system of contractions $\{S_i : D\rightarrow D \}_{i=1}^n$, there exists a bounded, non-empty open set $V$ such that $\cup_{i=1}^n S_i(V) \subseteq V$, where the union is over disjoint sets (this is known as \emph{Moran's open set condition}, see \cite[Section 6.5]{edgar2008measure} or \cite[Section 9.2]{falconer1990fractal}). This also guarantees that the $\delta$-Hausdorff measure takes a finite value when $\delta=\operatorname{dim}_H F$. We formulate the above discussion more succinctly in the following theorem and refer the reader to \cite[Theorem 9.3]{falconer1990fractal} for a (rather technical) proof.

\begin{theorem} \label{theorem:self-similar}
Let $D\subseteq \mathbb{R}^k$ be a compact set. Consider a system of contractions $\{S_i : D\rightarrow D \}_{i=1}^n$ with the corresponding scaling factors $r_i\in (0,1)$ such that there exists a bounded no-empty open set $V$ with the property that $\cup_{i=1}^n S_i(V)$ is a disjoint union contained in $V$. Then, the Hausdorff dimension of the associated attractor $F$ is given by the unique solution $d$ of the equation $$\sum_{i=1}^n r_i^d = 1.$$ Moreover, $H^\delta (F)$ takes a finite value for $\delta=d$.
\end{theorem}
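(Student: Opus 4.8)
The plan is to split the statement into three parts: the uniqueness of $d$, the upper bound $\operatorname{dim}_H F \le d$ together with the finiteness $H^d(F) < \infty$, and the matching lower bound $\operatorname{dim}_H F \ge d$, which is where Moran's open set condition enters. Throughout I would use multi-index notation: for $I = (i_1,\dots,i_m) \in \{1,\dots,n\}^m$ write $S_I = S_{i_1} \circ \dots \circ S_{i_m}$ and $r_I = r_{i_1}\cdots r_{i_m}$, so that iterating the attractor identity $F = \bigcup_i S_i(F)$ gives $F = \bigcup_{|I|=m} S_I(F)$ for every $m$. The single algebraic fact driving everything is that $\sum_{|I|=m} r_I^d = \big(\sum_{i=1}^n r_i^d\big)^m = 1$ whenever $\sum_i r_i^d = 1$. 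Uniqueness of $d$ is then immediate, since $\varphi(\delta) = \sum_i r_i^\delta$ is continuous and strictly decreasing (each $r_i \in (0,1)$) with $\varphi(\delta)\to 0$ as $\delta\to\infty$, so $\varphi(\delta)=1$ has at most one solution.

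For the upper bound I would cover $F$ by the family $\{S_I(F)\}_{|I|=m}$. Since each $S_i$ contracts by $r_i$, we have $\operatorname{diam} S_I(F) \le r_I \operatorname{diam} F \le (\max_i r_i)^m \operatorname{diam} F \to 0$, so for any $\epsilon>0$ this is an $\epsilon$-cover once $m$ is large. Then $\overbar H^d_\epsilon(F) \le \sum_{|I|=m}(\operatorname{diam} S_I(F))^d \le (\operatorname{diam} F)^d \sum_{|I|=m} r_I^d = (\operatorname{diam} F)^d$, and letting $\epsilon\to 0$ gives $H^d(F) \le (\operatorname{diam} F)^d < \infty$. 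By the earlier lemma on the $\delta$-dependence of $\overbar H^\delta$, finiteness of $H^d(F)$ forces $\overbar H^\delta(F)=0$ for all $\delta > d$, hence $\operatorname{dim}_H F \le d$. Note this half needs no disjointness hypothesis.

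The lower bound is the substantive part. The strategy is to build a mass distribution on $F$ and invoke the mass distribution principle: if a finite Borel measure $\mu$ with $\mu(F)>0$ satisfies $\mu(U)\le C(\operatorname{diam} U)^d$ for all $U$ of small diameter, then for any cover $\{U_j\}$ of $F$ we get $\mu(F) \le \sum_j \mu(U_j) \le C\sum_j(\operatorname{diam} U_j)^d$, so $H^d(F) \ge \mu(F)/C > 0$. The measure I would use is the self-similar measure associated to the probability weights $p_i = r_i^d$ (these sum to $1$, which is precisely why $d$ is the correct exponent); its existence, with $\mu(F)=1$ and $\mu(S_I(F)) = r_I^d$, can be obtained by pushing the Bernoulli measure on the code space $\{1,\dots,n\}^{\mathbb N}$ forward through the canonical coding map onto $F$, or by a fixed-point argument for $\mu \mapsto \sum_i p_i\,\mu\circ S_i^{-1}$ on probability measures.

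The hard step is verifying the growth bound $\mu(U)\le C(\operatorname{diam} U)^d$, and this is exactly where the open set condition is indispensable. Given $U$ with small diameter $\rho$, I would truncate each infinite code at the first level where $r_I$ drops to size comparable to $\rho$, producing a finite antichain $\mathcal{I}$ of multi-indices with $r_I \asymp \rho$ for which the open sets $\{S_I(V)\}_{I\in\mathcal I}$ are pairwise disjoint (iterating $\bigcup_i S_i(V)\subseteq V$ with disjoint union). Since $V$ is bounded, open and non-empty it contains and is contained in a ball, so each $S_I(V)$ contains a ball of radius $\asymp\rho$, and these balls are disjoint. Any $U$ meeting $S_I(F)$ lies within distance $\asymp\rho$ of $S_I(V)$, so all such $S_I(V)$ are confined to a neighbourhood of $U$ of radius $\asymp\rho$; a volume-packing estimate in $\mathbb{R}^k$ then bounds the number of these disjoint comparable balls by a constant $Q$ independent of $\rho$. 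Hence at most $Q$ of the pieces meet $U$, giving $\mu(U) \le \sum_{I\in\mathcal I,\,S_I(F)\cap U\neq\emptyset} r_I^d \le Q\,\rho^d = Q(\operatorname{diam} U)^d$, and the mass distribution principle yields $H^d(F)\ge 1/Q>0$, so $\operatorname{dim}_H F \ge d$. Combined with the upper bound this gives $\operatorname{dim}_H F = d$ with $0 < H^d(F) < \infty$. I expect this packing argument to be the main obstacle, together with the bookkeeping that one genuinely needs the $S_i$ to be similarities, rather than mere contractions, for the balls inside $S_I(V)$ to be comparable in size from below.
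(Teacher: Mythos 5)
Your proposal is correct in outline, but note that the paper itself does not actually prove this theorem: it only presents the heuristic for the case where the pieces $S_i(F)$ are exactly pairwise disjoint (countable additivity plus the scaling property of Corollary~\ref{corollary:hausdorff-scaling} give $H^\delta(F) = \sum_{i=1}^n r_i^\delta H^\delta(F)$, and one cancels after assuming $0 < H^\delta(F) < \infty$), and then defers the general case to Falconer's Theorem 9.3. What you have sketched is precisely that deferred proof: the upper bound $\operatorname{dim}_H F \le d$ and $H^d(F) < \infty$ via the iterated covers $\{S_I(F)\}_{|I|=m}$ (which, as you correctly observe, needs no separation hypothesis at all), and the lower bound via the self-similar measure with weights $p_i = r_i^d$, the mass distribution principle, and the volume-packing count of the disjoint sets $S_I(V)$ along an antichain with $r_I$ comparable to $\operatorname{diam} U$. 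So your route supplies exactly what the paper outsources; the heuristic in the text buys intuition (why $\sum r_i^d = 1$ is the right equation) at the price of assuming both exact disjointness and $0<H^d(F)<\infty$, which is circular, while your argument establishes both. One step you should make explicit in a full write-up: $F \subseteq \overline{V}$, which follows from $\bigcup_i S_i(\overline V) \subseteq \overline V$ together with uniqueness of the attractor; this is what justifies your claim that any $U$ meeting $S_I(F)$ lies close to $S_I(V)$. Also, the clean way to bound $\mu(U)$ is through cylinder sets of the coding map ($\mu(U) \le \sum \tilde\mu([I])$ over indices $I$ in the antichain with $S_I(F)\cap U \neq \emptyset$), since the pointwise identity $\mu(S_I(F)) = r_I^d$ can fail where pieces overlap.

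Your closing caveat is not a side remark but a genuine defect in the statement as printed: the paper defines the $S_i$ merely as contractions, $\|S_i(x)-S_i(y)\| \le r_i\|x-y\|$, and for mere contractions the theorem is false. Indeed the $r_i$ are then only upper bounds and hence not canonical (any larger $r_i' < 1$ would serve equally well), so the ``unique solution $d$'' is not even well defined, and only your upper-bound half $\operatorname{dim}_H F \le d$ survives. The lower bound requires the $S_i$ to be similarities, $\|S_i(x)-S_i(y)\| = r_i\|x-y\|$, exactly as in Falconer's hypothesis, because otherwise $S_I(V)$ need not contain a ball of radius comparable to $r_I$ and the packing count collapses. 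Your proof is the right one -- for the correctly stated theorem.
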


To conclude this article, we compute the Hausdorff dimension of several self-similar structures to show how Theorem~\ref{theorem:self-similar} is employed in practice.

\begin{example}[Koch curve]
Start with the unit interval $K_0 = [0,1]$. divide it into 3 equal parts and replace the middle part with an equilateral triangle with its base fixed in the position of the removed interval. Remove the base of the added triangle again to obtain the curve $K_1$ as a union of four equal intervals. Repeat this step for each of these intervals to obtain $K_2$, and iterate this process indefinitely to construct the \emph{Kock curve}, see Figure~\ref{fig:koch-bnw}. 

\begin{figure}[H]
    \centering
    \includegraphics[scale=0.6]{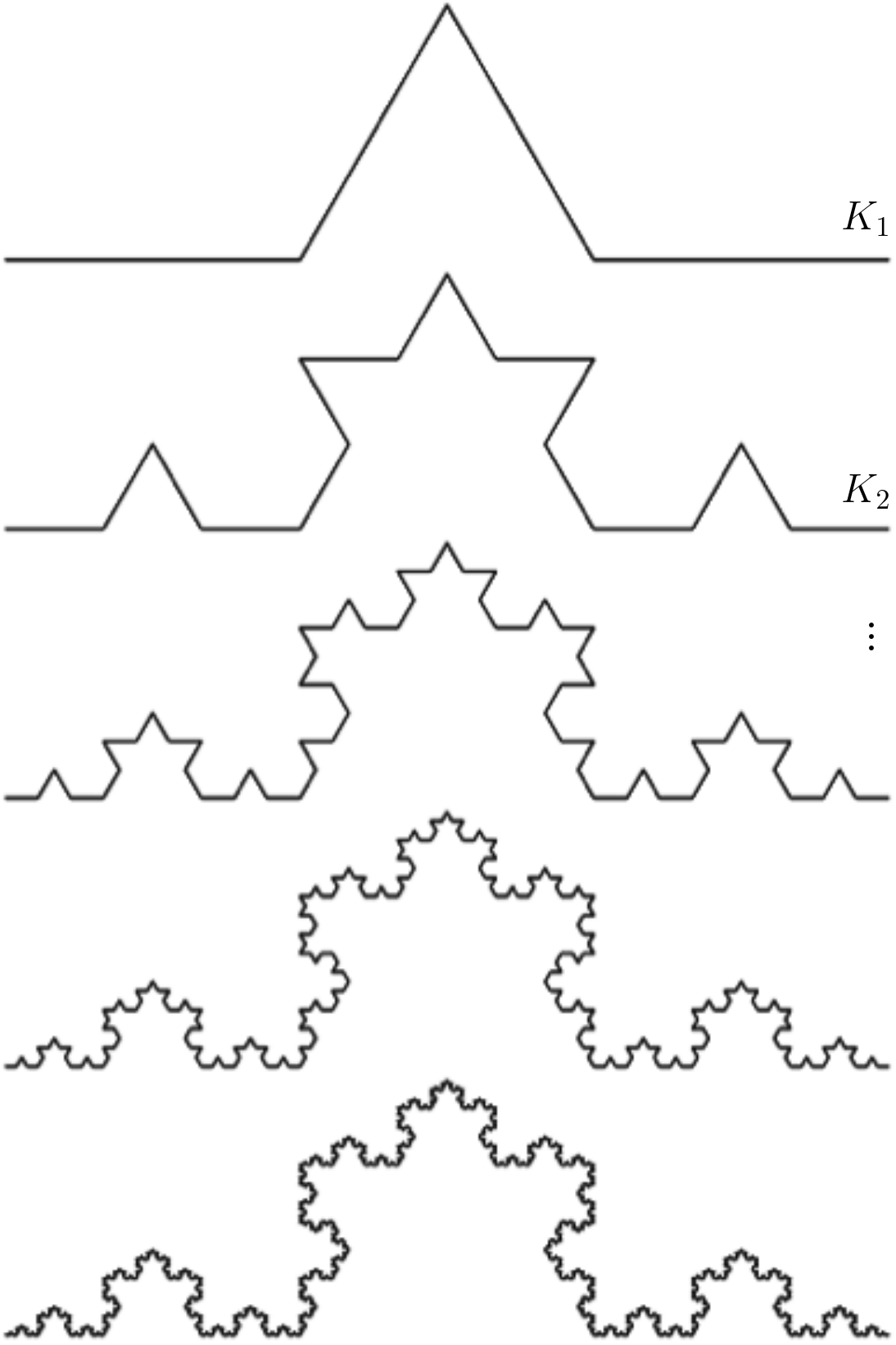}
    \caption{The first few iterations in the evolution of the Koch curve}
    \label{fig:koch-bnw}
\end{figure}

Alternatively, one can define $K$ as the attractor of the contractions mapping the unit interval $[0,1]$ onto each of the four intervals in $K_1$, see Figure~\ref{fig:koch-color}. Taking $V$ to be the (topological) interior of the isosceles triangle with base $[0,1]$ and height $1/2\sqrt{3}$, it is clear that these contractions satisfy Moran's open set condition. Finally, since the scaling factor $r=1/3$ is the same for all 4 contractions, we deduce that the Kock curve has Hausdorff dimension $\operatorname{dim}_H K = \log 4/\log 3$, see Theorem~\ref{theorem:self-similar}.

\begin{figure}[H]
    \centering
    \includegraphics[scale=0.7]{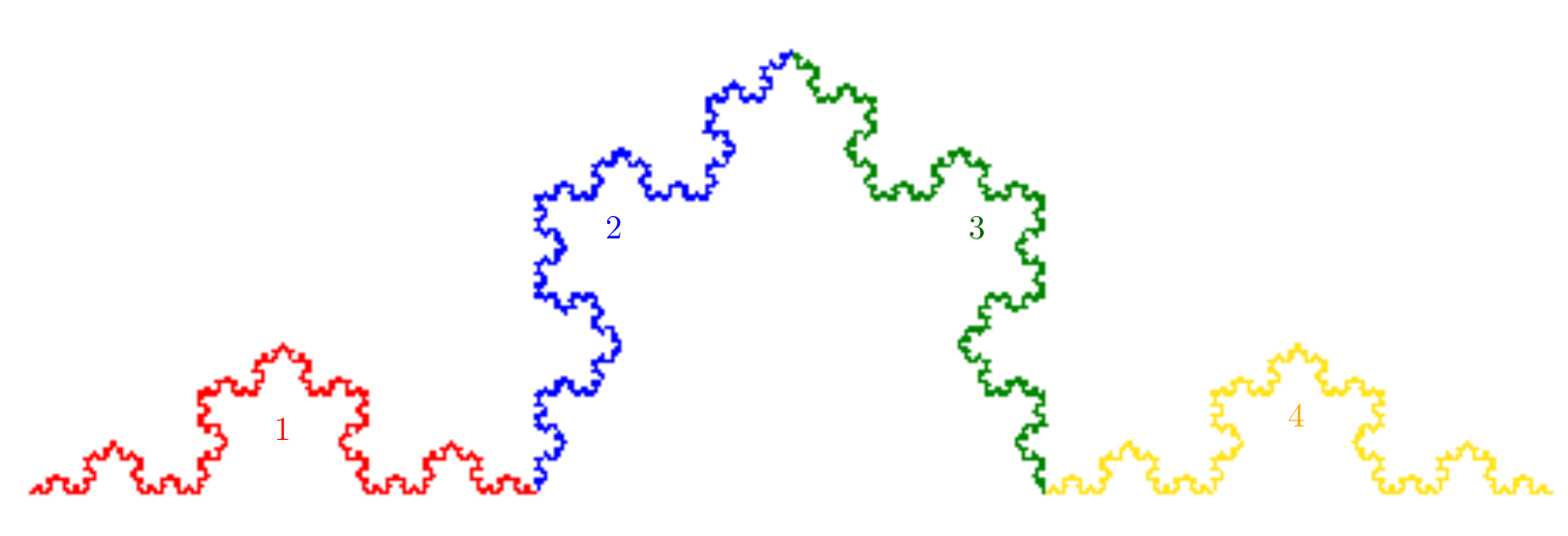}
    \caption{The Koch curve $K$, expressed as the attractor of the set of 4 contractions $\{S_i\}_{i=1}^4$, each of which maps the unit interval onto one of the 4 intervals in $K_1$. Different colored parts of the curve denote the different images $S_i(K)$ of the Koch curve under these contraction, so that $K=\cup_{i=1}^4 S_i(K)$}.
    \label{fig:koch-color}
\end{figure}
\end{example}

\begin{example}[Sierpiński triangle]
Start with an equilateral triangle $T_0$ with base equal to the unit interval $[0,1]$. Shrink it to an equilateral triangle with half the base length. Make 3 copies of this shrunken triangle and arrange them so that each triangle touches the other two at the corners. (Notice the emergence of the inverted triangle hole in the middle). Call this arrangement $T_1$. Repeat the above step for each of the 3 triangles in $T_1$ to obtain $T_2$ and iterate this process indefinitely to obtain the \emph{Sierpiński triangle} $T$, see Figure~\ref{fig:Striangle}. 
\begin{figure}[H]
    \centering
    \includegraphics[scale=0.8]{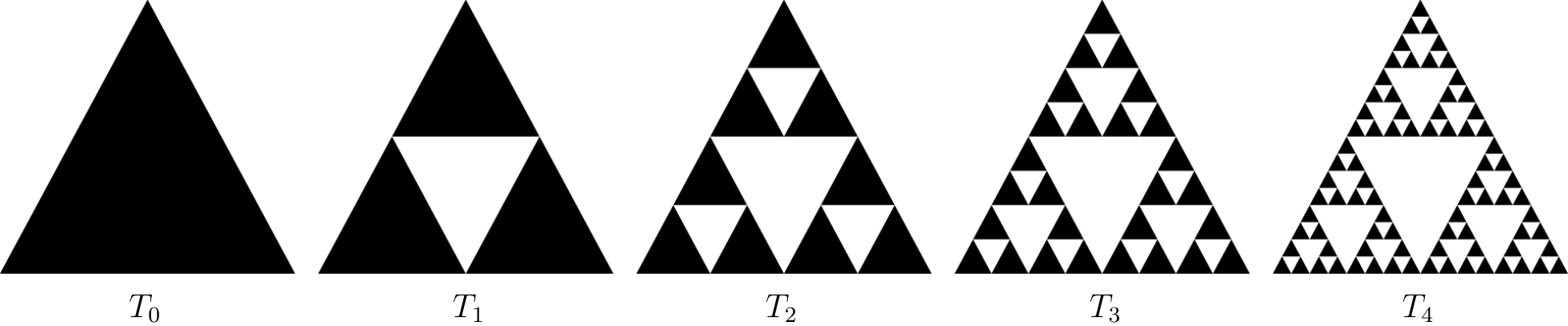}
    \caption{The first few iterations in the evolution of the Sierpiński triangle.}
    \label{fig:Striangle}
\end{figure}
It is clear that $T$ can also be defined as the attractor of the set of contractions with scaling factor $r=1/2$ that map the $T_0$ triangle onto each of the 3 smaller ones in $T_1$. Moreover, Moran's open set condition holds, as can be seen by taking $V$ to be the (topological) interior of $T_0$ triangle. Hence, we can immediately apply Theorem~\ref{theorem:self-similar} again to obtain the Hausdorff dimension $\operatorname{dim}_H T = \log 3/\log 2$.
\begin{figure}[H]
    \centering
    \includegraphics[scale=0.75]{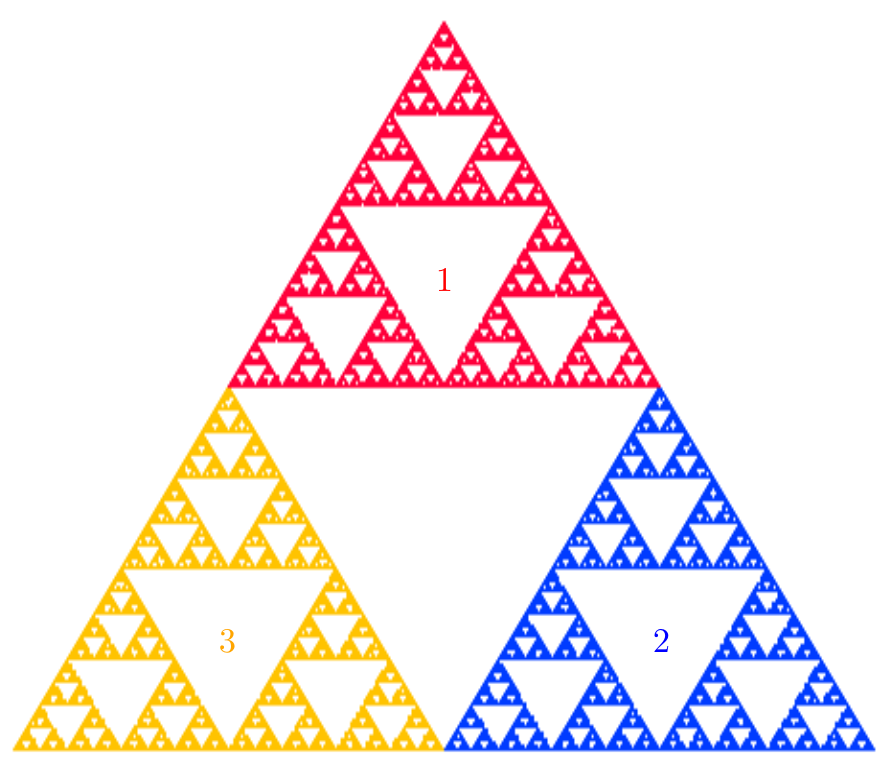}
    \caption{The Sierpiński triangle $T$, expressed as the attractor of the set of 3 contractions $\{S_i\}_{i=1}^3$, each of which maps the $T_0$ triangle onto one of the 3 triangles in $T_1$. Different colored parts of the triangle denote the different images $S_i(K)$ of the Sierpiński triangle under these contraction, so that $T=\cup_{i=1}^3 S_i(T)$.}
    \label{fig:my_label}
\end{figure}
\end{example}

\begin{remark}
It would be edifying for the reader to compute the topological dimensions of the objects that have been considered in this section and check that these are strictly less than the corresponding Hausdorff dimensions. In fact, this property is a fundamental feature of \emph{fractals}, a class of objects with an inherent local roughness and structure built into them which stays undetected through the topological lens of dimension. While some perfectly self-similar sets -- like the ones considered in this section -- can be classified as fractals, this is not necessarily true in general (consider a line or a plane for instance, which are certainly self-similar but have the same topological and Hausdorff dimensions). Unquestionably, there's more to the theory of fractals than what has been said here, and we refer the interested readers to Mandelbrot's marvellous book \cite{mandelbrot1983fractal} for further details.

\end{remark}

\bibliographystyle{alpha}
\bibliography{references}

\newcommand{\etalchar}[1]{$^{#1}$}
\begin{thebibliography}{PJS{\etalchar{+}}92}

\bibitem[Bad12]{bader2012space}
M.~Bader.
\newblock {\em Space-Filling Curves: An Introduction with Applications in
  Scientific Computing}.
\newblock Texts in Computational Science and Engineering. Springer Berlin
  Heidelberg, 2012.

\bibitem[BBE11]{Elekes2011Haus}
Richárd Balka, Zoltán Buczolich, and Marton Elekes.
\newblock A new fractal dimension: The topological hausdorff dimension.
\newblock {\em Advances in Mathematics}, 274, 08 2011.

\bibitem[Bil86]{billingsley1986probability}
P.~Billingsley.
\newblock {\em Probability and Measure}.
\newblock Wiley Series in Probability and Statistics. Wiley, 1986.

\bibitem[Can77]{Cantor1877}
G.~Cantor.
\newblock Ein beitrag zur mannigfaltigkeitslehre.
\newblock {\em Journal für die reine und angewandte Mathematik}, 84:242--258,
  1877.

\bibitem[Edg08]{edgar2008measure}
G.~Edgar.
\newblock {\em Measure, Topology, and Fractal Geometry}.
\newblock Undergraduate Texts in Mathematics. Springer New York, 2008.

\bibitem[FF90]{falconer1990fractal}
K.~Falconer and K.A. Falconer.
\newblock {\em Fractal Geometry: Mathematical Foundations and Applications}.
\newblock Wiley, 1990.

\bibitem[FM16]{fernandex2016Haus}
M.~Fernández-Martínez.
\newblock A survey on fractal dimension for fractal structures.
\newblock {\em Applied Mathematics and Nonlinear Sciences}, 1(2):437 -- 472, 01
  Jul. 2016.

\bibitem[Gra85]{GRASSBERGER1985Haus}
P.~Grassberger.
\newblock Generalizations of the hausdorff dimension of fractal measures.
\newblock {\em Physics Letters A}, 107(3):101 -- 105, 1985.

\bibitem[Hal76]{halmos1976measure}
P.R. Halmos.
\newblock {\em Measure Theory}.
\newblock Graduate Texts in Mathematics. Springer New York, 1976.

\bibitem[Hil91]{Hilbert1891space}
David Hilbert.
\newblock Ueber die stetige abbildung einer line auf ein fl\"{a}chenst\"{u}ck.
\newblock {\em Mathematische Annalen}, 38(3):459--460, September 1891.

\bibitem[Hut81]{Hutch1981attractor}
John~E. Hutchinson.
\newblock Fractals and self similarity.
\newblock {\em Indiana University Mathematics Journal}, 30(5):713--747, 1981.

\bibitem[Man05]{Manin2005TheNO}
Yuri~I. Manin.
\newblock The notion of dimension in geometry and algebra.
\newblock {\em Bulletin of the American Mathematical Society}, 43:139--161,
  2005.

\bibitem[MFC83]{mandelbrot1983fractal}
B.B. Mandelbrot, W.H. Freeman, and Company.
\newblock {\em The Fractal Geometry of Nature}.
\newblock Einaudi paperbacks. Henry Holt and Company, 1983.

\bibitem[NS14]{nicolay2014building}
Samuel Nicolay and Laurent Simons.
\newblock Building cantor's bijection, 2014.

\bibitem[Pea90]{Peano1890}
G.~Peano.
\newblock Sur une courbe, qui remplit toute une aire plane.
\newblock {\em Mathematische Annalen}, 36(1):157--160, March 1890.

\bibitem[PJS{\etalchar{+}}92]{peitgen1992chaos}
H.O. Peitgen, H.~J{\"u}rgens, D.~Saupe, E.~Maletsky, and L.~Yunker.
\newblock {\em Chaos and Fractals: New Frontiers of Science}.
\newblock Springer-Verlag, 1992.

\bibitem[RRC87]{rudin1987real}
W.~Rudin, W.A. RUDIN, and Tata McGraw-Hill~Publishing Company.
\newblock {\em Real and Complex Analysis}.
\newblock Higher Mathematics Series. McGraw-Hill Education, 1987.

\bibitem[Sch05]{Dierk2005Haus}
Dierk Schleicher.
\newblock Hausdorff dimension, its properties, and its surprises.
\newblock {\em American Mathematical Monthly}, 114, 06 2005.

\end{thebibliography}

\hrule
\bigskip
\end{document}